\documentclass{article}

\usepackage{amsfonts}
\usepackage{amsmath}
\usepackage{mathtools}
\usepackage{multirow}
\usepackage{graphicx}
\usepackage{epstopdf}
\usepackage{algorithm}
\usepackage{algorithmic}
\usepackage{bm}
\usepackage{microtype}
\usepackage{amssymb}
\usepackage{enumerate}
\usepackage{color}
\usepackage{marvosym}
\usepackage{subcaption}
\usepackage{hyperref}
\usepackage{url}
\usepackage{makecell}

\newtheorem{theorem}{Theorem}

\newtheorem{remark}{Remark}
\newtheorem{lemma}{Lemma}
\newtheorem{proof}{Proof}

\newtheorem{definition}{Definition}
\newtheorem{assumption}{Assumption}

\newcommand{\0}{\mathbf{0}}

\newcommand{\E}{\mathbb{E}}
\newcommand{\bO}{{\cal O}}
\newcommand{\R}{\mathbb{R}}

\newcommand{\F}{\mathcal{F}}

\newcommand{\I}{\mathbf{I}}

\newcommand{\V}{\mathbf{V}}

\renewcommand{\L}{\mathbf{L}}
\newcommand{\M}{\mathbf{M}}

\newcommand{\G}{\mathbf{G}}

\newcommand{\fO}{\mathbf{D}}
\newcommand{\X}{\mathbf{X}}
\newcommand{\A}{\mathbf{A}}
\newcommand{\Y}{\mathbf{Y}}
\newcommand{\Z}{\mathbf{Z}}

\newcommand{\bL}{\mathbf{L}}
\newcommand{\bR}{\mathbf{R}}
\newcommand{\bP}{\mathbf{P}}
\newcommand{\bQ}{\mathbf{Q}}
\newcommand{\bPh}{\mathbf{\Phi}}

\newcommand{\<}{\left\langle}
\renewcommand{\>}{\right\rangle}
\DeclareMathOperator*{\argmax}{argmax}

\usepackage[accepted]{icml2020}

\icmltitlerunning{Convergence of Rotation-based Matrix Optimizers}

\begin{document}

\onecolumn
\icmltitle{Convergence of Rotation-based Matrix Optimizers: \\A Unified Analysis of SOAP, Conda, and SPlus}

\begin{icmlauthorlist}
\icmlauthor{Yiwen Sun}{gooo}
\icmlauthor{Huan Li}{to}
\icmlauthor{Zhouchen Lin}{goo}
\end{icmlauthorlist}

\icmlaffiliation{to}{Institute of Robotics and Automatic Information Systems, College of Artificial Intelligence, Nankai University, Tianjin, China.\\}
\icmlaffiliation{goo}{National Key Lab of General AI, School of Intelligence Science and Technology, Peking University, Beijing, China.\\}
\icmlaffiliation{gooo}{School of the Gifted Young, University of Science and Technology of China, Hefei, China.\\}
\icmlcorrespondingauthor{Huan Li and Zhouchen Lin}{lihuanss@nankai.edu.cn, zlin@pku.edu.cn}





\vskip 0.3in



\printAffiliationsAndNotice{}  

\begin{abstract}
  In this work, we develop a unified theoretical framework for analyzing the convergence of rotation-based matrix optimizers, which apply orthogonal transformations to map the momentum into a rotated space, perform coordinate-wise or normalized updates there, and then rotate the updates back. Our framework encompasses prominent matrix optimizers, including SOAP, Conda, and truncated SPlus, as well as matrix-parameterized Adam as a special case. As our main result, we establish, for the first time, the convergence rate of SOAP with sharp dimensional dependence, as well as the convergence rate of Adam measured by the nuclear norm. Our framework also covers a more general variant of rotation-based optimizers that allows arbitrary orthogonal rotation matrices, allowing these matrices to depend on the current stochastic gradient at each iteration. Technically, our framework leverages a row-column trace-control argument that converts elementwise bounds into bounds on two diagonal control matrices, thereby improving the dimension dependence of the resulting bound.
\end{abstract}

\section{Introduction}

Pre-training large language models incurs substantial computational costs. Over the past decade, the Adam family of algorithms, including AdaGrad \citep{Duchi-2011-jmlr,McMahan-2010-colt}, RMSProp \citep{RMSProp-2012-hinton}, Adam \citep{adam-15-iclr}, and AdamW \citep{adanw-2019-iclr}, has served as the de facto standard for optimizing deep neural networks. These methods treat the network’s parameters as a single high-dimensional vector and do not explicitly exploit row/column correlations within matrices. More recently, matrix-based optimizers, which exploit the inherent matrix structure of deep neural network parameters, have garnered increasing attention and demonstrated the potential to outperform their vector-based counterparts. Prominent examples include Shampoo \citep{shampoo-icml-18}, Muon \citep{muon2024}, SOAP \citep{soap-2025-iclr}, SPlus \citep{splus-2025-nips}, and Conda \citep{Conda}. The latter three approaches can be collectively classified as rotation-based optimizers, which rotate the gradient or momentum into a new space via an orthogonal transformation, produce an update in that space, and then rotate the update back to the original space. These methods differ mainly in how the rotation matrices and the transformed update are constructed: SOAP combines the eigenbasis of Shampoo’s preconditioners, used as the rotation matrix, with an Adam-style update; SPlus replaces the Adam-style update with a sign-type update; and Conda employs the singular-vector basis of the momentum as the rotation matrix. A general algorithmic formulation encompassing most popular algorithms in this family is provided in \citep{aro-gong-26}. While \citet{aro-gong-26} provide a unified framework at the algorithmic level, our work provides a unified framework at the level of theoretical analysis.

Although the convergence behavior of vector-based optimizers has been thoroughly studied in the literature \citep{bottou-2022-tmlr,luo-2020-iclr,lihuan-rmsprop-2024,luo-2022-nips,hong-2024-adam,haochuanli-2023,Li-2025-nips}, the theoretical understanding of matrix-based optimizers remains comparatively underdeveloped. To date, convergence and convergence rates have been established for only a few matrix optimizers, such as Muon \citep{muon-hongmingyi-25,Muon-NS-ICLR-26,muon-shen-25,muon-chen-25,muon-sato-25} and Shampoo \citep{shampoo-li-26,shampoo-xie-25}. For rotation-based matrix optimizers, most notably SOAP, rigorous convergence analysis is still lacking. Relative to Muon and Shampoo, the main technical hurdles stem from two intrinsic properties of rotation-based updates. First, it is non-trivial to combine the properties in the two spaces; moreover, the rotation matrices may depend on the current stochastic gradient. Second, coordinate-wise updates in the rotated space introduce an undesirable dependence on the number of matrix entries, $mn$.

\subsection{Contributions}\label{sec:contribution}
To address the above issues, we treat SOAP, Conda, and SPlus in a unified framework with the following contributions:
\begin{enumerate}
\item We establish the following convergence rate
\begin{eqnarray}
\begin{aligned}\notag
	\frac{1}{K}\hspace*{-0.05cm}\sum_{k=1}^K\E\left[\left\|\nabla f(\X_k)\right\|_*\right]\hspace*{-0.05cm}\leq\hspace*{-0.05cm} \bO\hspace*{-0.07cm}\left(\hspace*{-0.07cm}\sqrt{m\hspace*{-0.05cm}+\hspace*{-0.05cm}n}\max\left\{\sqrt[4]{\frac{\sigma^2L\left(f(\X_1)\hspace*{-0.05cm}-\hspace*{-0.05cm}f^*\right)}{K}},\sqrt{\frac{L\left(f(\X_1)\hspace*{-0.05cm}-\hspace*{-0.05cm}f^*\right)}{K}}\right\}\right)
\end{aligned}
\end{eqnarray}
for generalized SOAP, Conda and the truncated modification of SPlus, matching the state-of-the-art rate of Shampoo \citep{shampoo-li-26} for matrix optimizers. 
\item Our analysis extends to a considerably more general variant that accommodates arbitrary orthogonal rotation matrices, requiring only that they be orthogonal. This flexibility allows the rotation matrices to be constructed adaptively from the entire optimization history up to the current step (and thus may depend on the current stochastic gradient), covering those used in SOAP, Conda, and SPlus. Consequently, the theoretical guarantees we establish not only apply directly to these methods but also shed light on a broader class of rotation-based matrix optimizers, providing a unified analytical framework for understanding their convergence behavior.

\item In particular, our analysis also applies when the rotation matrices reduce to the identity matrix, in which case SOAP reduces to Adam and thus the above convergence rate also applies to Adam. By contrast, the existing literature measures the convergence rate of Adam only in the $\ell_1$ or $\ell_2$ norm; we provide the first nuclear-norm convergence rate for Adam with matrix-valued parameters, identical to that of matrix optimizers such as AdamW-style Shampoo \citep{shampoo-li-26}.

\item Technically, we propose a row-column trace-control argument in which the control matrices are carefully designed, reducing the problem to rigorously bounding their traces and lowering the dimensional dependence from $\sqrt{mn}$ to $\sqrt{m+n}$. To handle the difficulty posed by orthogonal transformations between the two spaces, we exploit unitary invariance, which holds for any unitarily invariant norm such as the nuclear norm and the Frobenius norm.
\end{enumerate}

\subsection{Problem Settings, Notations, and Assumptions}\label{sec:assumption}
We study the following nonconvex problem with matrix-valued parameters in this paper
\begin{equation}\label{problem}
\min_{\X\in\mathbb R^{m\times n}} f(\X),
\end{equation}
where $f(\X)=\E_{\zeta\sim\mathcal{P}}[f(\X;\zeta)]$ and $\zeta$ is the sample drawn from the data distribution $\mathcal{P}$. We assume that 
$f$ is bounded below and denote $f^*=\inf_{\X} f(\X)$.

We denote vectors by lowercase bold letters and matrices by uppercase bold letters. For vectors,
denote $\|\cdot\|_2$ as the $\ell_2$ Euclidean norm. For matrices, denote $\|\cdot\|_F$, $\|\cdot\|_{op}$, and $\|\cdot\|_*$ as the Frobenius norm, spectral norm (largest singular value), and nuclear norm (sum of singular values), respectively. Denote $\X_{k,i,j}$, $\X_{k,i,:}$, and $\X_{k,:,j}$ to be the $(i,j)$-th element, $i$-th row, and $j$-th column of matrix $\X$ at iteration $k$, respectively. For matrices $\X$ and $\Y$, denote $\frac{\X}{\Y}$, $\X^2$, $|\X|$, and $\X^{\frac{1}{p}}$ as the elementwise division, square, absolute value, and the $\frac{1}{p}$-th power (different from the matrix power, with a slight abuse of terminology), respectively. Denote $\F_k=\sigma(\zeta_1,\zeta_2,\cdots,\zeta_k)$ to be the sigma field of the stochastic gradients up to $k$, denote $\E_{\F_k}[\cdot]$ as the expectation with respect to $\F_k$ and $\E_k[\cdot|\F_{k-1}]$ the conditional expectation with respect to $\zeta_k$ given $\F_{k-1}$. For the sake of brevity, $\E_{\F_K}[\cdot]$ will be denoted as $\E[\cdot]$. Denote $\sigma_i(\A)$ and $\lambda_i(\A)$ as the $i$-th singular value and eigenvalue of $\A$, respectively.

We make the following assumptions throughout this paper:
\begin{assumption}
	Smoothness: $\|\nabla f(\Y)-\nabla f(\X)\|_F\leq L\|\Y-\X\|_F, \forall \X,\Y$.
\end{assumption}
\begin{assumption}
	Unbiased estimator: $\E_k\left[\G_k\big|\F_{k-1}\right]=\nabla f(\X_k)$.
\end{assumption}
\begin{assumption}
	Bounded noise variance: $\E_k\left[\|\G_k-\nabla f(\X_k)\|_F^2\big|\F_{k-1}\right]\leq\sigma^2$.
\end{assumption}

\subsection{Related Work}
Among matrix-based methods, the convergence properties of Muon have been extensively investigated and are now largely understood, owing to its relatively simple structure \citep{muon-hongmingyi-25,muon-shen-25,muon-chen-25,muon-sato-25}. For more intricate optimizers, recent progress includes the work of \citet{shampoo-li-26}, who established rigorous convergence guarantees for a practical AdamW-style implementation of Shampoo. Nevertheless, with the exception of these two cases, the convergence analysis of other matrix-based optimizers remains an open problem to the best of our knowledge, especially for the representative SOAP optimizer. Perhaps the most closely related study is that of \citet{galore-icml-25}, which analyzes a variant of the GaLore algorithm \citep{galore-icml-24}. However, their approach replaces the orthogonal rotation employed by SOAP and semi-orthogonal low-rank projection by GaLore with a random projection matrix and uses momentum stochastic gradient descent (MSGD) within the projected space, whereas SOAP and GaLore leverage the adaptive mechanisms of Adam. Consequently, the analytical techniques developed in \citep{galore-icml-25} are far from sufficient to establish the convergence of the full SOAP algorithm, as well as of Conda, and SPlus. \citet{galore-icml-25} also constructed a counterexample demonstrating the non-convergence of GaLore due to its low-rank projection. Other works have studied SOAP from different perspectives \citep{KL-26-iclr,soap-25}, but none has provided a convergence guarantee.

\begin{algorithm}[t]
	\caption{General Rotation-Based Matrix Optimizer}
	\label{general}
	\begin{algorithmic}
		\STATE Hyper parameters: $\eta,\theta$. \STATE Initialize $\X_1$, $\M_0=\0$.
		\FOR{$k=1,2,\cdots,K$}
		\STATE $\G_k=\mbox{GradOracle}(\X_k)$
        \STATE $\M_k=\theta\M_{k-1}+(1-\theta)\G_k$
        \STATE $\mbox{Generate arbitrary orthogonal matrices }\bP_k\in\mathbb R^{m\times m}\mbox{ and }\bQ_k\in\mathbb R^{n\times n}$
		\STATE $\X_{k+1}=\X_k-\eta\bP_k\bPh_k\bQ_k^\top$, where $\bPh_k= \frac{\bP_k^\top\M_k\bQ_k}{\fO_{k}}$ for some $\fO_k$
		\ENDFOR
	\end{algorithmic}
\end{algorithm}

\begin{table*}
\caption{Representative Rotation-Based Optimizers with Their Rotation Matrices and Updates
}\label{table-comp}
\begin{center}
\renewcommand{\arraystretch}{1.3}
\begin{tabular}{|c|c|c|}
\hline
 Optimizers & Rotation Matrices     & Update in the rotated space  \\
 \hline
 SOAP &  Eigenbasis of Shampoo’s preconditioner & Adam-style update \\
 SPlus & Eigenbasis of Shampoo’s preconditioner & Sign-type update\\
 Conda & Singular-vector basis of the momentum & Adam-style update\\
 \hline
 \makecell{Our \\ generalizations}
  & Arbitrary orthogonal matrices & \makecell{Adam-style or \\ truncated sign-type update}\\
\hline
\end{tabular}
\end{center}
\end{table*}

\section{Rotation-based Matrix Optimizers}\label{uni-frame} 
We begin by examining the general framework of rotation-based matrix optimizers presented in Algorithm \ref{general}. Here $\bPh_k$ denotes the update method used in the rotated space, such as the Adam-style update in SOAP and Conda, and the sign-type update in SPlus. We require the update $\bPh_k$ to admit the following representation for an entrywise positive matrix $\fO_{k}$:
\begin{equation}\notag
\bPh_k= \frac{\bP_k^\top\M_k\bQ_k}{\fO_{k}},
\end{equation}
where $\fO_k$ can be regarded as a normalizer. We describe the choice of $\fO_k$ for each algorithm individually in the following subsections. The orthogonal rotation matrices $\bP_k$ and $\bQ_k$ may be generated by any procedure, including periodic updates, and need not be independent of the current stochastic gradient at each iteration. We also provide examples of the generation process for each algorithm in the following subsections. The rotation matrices and the transformed update capture the main differences among these algorithms, thus allowing the framework to represent and include many popular algorithms. Table \ref{table-comp} lists the representative examples studied in this paper.

\newpage
\vspace*{-1.2cm}\noindent\begin{minipage}[t]{0.5\textwidth}
	\begin{algorithm}[H]
		\caption{SOAP \citep{soap-2025-iclr}}
		\label{soap}
		\begin{algorithmic}
			\STATE Hyper parameters: $\eta,\theta,\beta,\varepsilon,T$
			\STATE Initialize $\X_1$, $\M_0$, $\V_0$, $\bL_0$, $\bR_0$, $\bP_1$, $\bQ_1$.
			\FOR{$k=1,2,\cdots,K$}
			\STATE $\G_k=\mbox{GradOracle}(\X_k)$
			\STATE $\M_k=\theta\M_{k-1}+(1-\theta)\G_k$
			\STATE $\G_k'=\bP_k^\top\G_k\bQ_k$
			\STATE $\M_k'=\bP_k^\top\M_k\bQ_k$
			\STATE $\V_k=\beta\V_{k-1}+(1-\beta)(\G_k')^2$
			\STATE $\X_{k+1}=\X_k-\eta\bP_k\frac{\M_k'}{\sqrt{\V_k+\varepsilon}}\bQ_k^\top$
			\STATE $\bL_k=\beta \bL_{k-1}+(1-\beta)\G_k\G_k^\top$
			\STATE $\bR_k=\beta \bR_{k-1}+(1-\beta)\G_k^\top\G_k$
			\IF{$k\mod T=0$}
			\STATE $\bP_{k+1}=\mbox{Eigenvector}(\bL_k)$
			\STATE $\bQ_{k+1}=\mbox{Eigenvector}(\bR_k)$
			\ELSE
			\STATE $\bP_{k+1}=\bP_k$, $\bQ_{k+1}=\bQ_k$
			\ENDIF
			\ENDFOR
		\end{algorithmic}
	\end{algorithm}
    \vspace*{-0.7cm}\begin{algorithm}[H]
	\caption{SPlus \citep{splus-2025-nips}}
    \label{splus}
	\begin{algorithmic}
		\STATE Hyper parameters:$\eta, \theta, \beta, \kappa, T$
		\STATE Initialize $\X_1$, $\X_1^{\prime}$, $\M_0$, $\L_0$, $\bR_0$, $\bP_0$, $\bQ_0$.
		\FOR{$k=1,2,\cdots,K$}
		\STATE $\G_k=\mbox{GradOracle}(\X_k^{\prime})$
		\STATE $\M_k=\theta\M_{k-1}+(1-\theta)\G_k$
        \STATE $\bL_k=(1-\beta)\bL_{k-1}+\beta \G_k\G_k^\top$
        \STATE $\bR_k=(1-\beta)\bR_{k-1}+\beta \G_k^\top\G_k$
        \IF{$k\mod T=0$}
		\STATE $\bP_{k}=\mbox{Eigenvector}(\bL_k)$
		\STATE $\bQ_{k}=\mbox{Eigenvector}(\bR_k)$
		\ELSE
		\STATE $\bP_{k}=\bP_{k-1}$, $\bQ_{k}=\bQ_{k-1}$
		\ENDIF
		\STATE $\X_{k+1}^{\prime}=\!\X_k^{\prime}\!-\!\frac{2\eta}{m+n}\bP_k\!\operatorname{Sign}(\bP_k^\top\M_k\bQ_k)\bQ_k^\top$
        \STATE $\X_{k+1}=(1-\kappa)\X_{k}+\kappa\X_{k+1}^{\prime}$
		\ENDFOR
	\end{algorithmic}
\end{algorithm}
\end{minipage}%
\hfill
\begin{minipage}[t]{0.48\textwidth}
	\begin{algorithm}[H]
		\caption{Generalized SOAP}
		\label{gsoap}
		\begin{algorithmic}
			\STATE Hyper parameters: $\eta,\theta,\beta,\varepsilon$
			\STATE Initialize $\X_1$, $\M_0=\0$, $\V_0=\0$.
			\FOR{$k=1,2,\cdots,K$}
			\STATE $\G_k=\mbox{GradOracle}(\X_k)$
			\STATE $\M_k=\theta\M_{k-1}+(1-\theta)\G_k$
            \STATE $\mbox{Generate arbitrary orthogonal matrices }$
            \STATE $\bP_k\in\mathbb R^{m\times m}\mbox{ and }\bQ_k\in\mathbb R^{n\times n}$
			\STATE $\V_k=\beta\V_{k-1}+(1-\beta)(\bP_k^\top\G_k\bQ_k)^2$
			\STATE $\X_{k+1}=\X_k-\eta\bP_k\frac{\bP_k^\top\M_k\bQ_k}{\sqrt{\V_k+\varepsilon}}\bQ_k^\top$
			\ENDFOR
		\end{algorithmic}
	\end{algorithm}
    \vspace*{-0.02cm}\begin{algorithm}[H]
		\caption{Conda \citep{Conda}}
		\label{conda}
		\begin{algorithmic}
			\STATE Hyper parameters: $\eta,\theta,\beta,\varepsilon$
			\STATE Initialize $\X_1$, $\M_0=\0$, $\V_0=\0$.
			\FOR{$k=1,2,\cdots,K$}
			\STATE $\G_k=\mbox{GradOracle}(\X_k)$
			\STATE $\M_k=\theta\M_{k-1}+(1-\theta)\G_k$
            \STATE $\bP_k,\Sigma_k,\mathbf{R}_k^\top=\mbox{SVD}(\M_k)$
			\STATE $\V_k=\beta\V_{k-1}+(1-\beta)(\bP_k^\top\G_k)^2$
			\STATE $\X_{k+1}=\X_k-\eta\bP_k\frac{\bP_k^\top\M_k}{\sqrt{\V_k+\varepsilon}}$
			\ENDFOR
		\end{algorithmic}
	\end{algorithm}
    \vspace*{-0.02cm}\begin{algorithm}[H]
	\caption{Generalized truncated SPlus}
	\label{gen-splus}
	\begin{algorithmic}
		\STATE Hyper parameters: $\eta,\theta$
		\STATE Initialize $\X_1$, $\M_0=\0$.
		\FOR{$k=1,2,\cdots,K$}
		\STATE $\G_k=\mbox{GradOracle}(\X_k)$
		\STATE $\M_k=\theta\M_{k-1}+(1-\theta)\G_k$
        \STATE $\mbox{Generate orthogonal matrices}$
        \STATE $\bP_k\in\mathbb R^{m\times m}\mbox{ and }\bQ_k\in\mathbb R^{n\times n}$
		\STATE $\X_{k+1}=\X_k-\eta\bP_k\frac{\bP_k^\top\M_k\bQ_k}{\max\left\{
\left|\bP_k^\top\M_k\bQ_k\right|,\delta\right\}}\bQ_k^\top$
		\ENDFOR
	\end{algorithmic}
\end{algorithm}
\end{minipage}

\subsection{SOAP}
We present the generalized SOAP algorithm in Algorithm \ref{gsoap}, which includes the original SOAP (Algorithm \ref{soap}) as a special case. In this case, the quantities $\bPh_k$ and $\fO_k$ in Algorithm \ref{general} can be written as
\begin{equation}\label{SOAP-def}
    \bPh_k=\frac{\bP_k^\top\M_k\bQ_k}{\sqrt{\V_k+\varepsilon}},\quad \fO_{k}=\sqrt{\V_k+\varepsilon}.
\end{equation}
The original SOAP periodically constructs orthogonal rotation matrices from the eigenbases of Shampoo’s preconditioners $\bL$ and $\bR$, whereas the generalized form accommodates essentially arbitrary orthogonal matrices, including those used in SOAP as special cases. This enables new classes of rotation matrices that may offer superior practical performance.

\subsection{Conda}\label{conda-section}
Conda is a special case of the generalized SOAP in Algorithm \ref{gsoap}, obtained by choosing $\bP_k\in\mathbb R^{m\times m}$ as the matrix of left singular vectors of $\M_k$ (suppose that $m\leq n$), setting $\bQ_k$ to the identity matrix, and using the same $\bPh_k$ function as in SOAP. The relaxation of the constraints on $\bP_k$ and $\bQ_k$ in generalized SOAP, which allows them to depend on $\G_k$, is especially important here, since Conda constructs $\bP_k$ from the current momentum, which depends on $\G_k$. This illustrates more explicitly why general rotation matrices are needed, enabling Conda to fit naturally within the same framework as SOAP.

\subsection{SPlus}
SPlus adopts the same rotation matrices as SOAP, together with the sign-type update in the rotated space, as presented in Algorithm \ref{splus}. In contrast to SOAP, however, its rotation matrices $\bP_k$ and $\bQ_k$ depend on the current stochastic gradient, whereas SOAP uses one-step-delayed counterparts. We further generalize SPlus to accommodate arbitrary orthogonal matrices in Algorithm \ref{gen-splus}. When the sign-type update is adopted, the function $\bPh_k$ in Algorithm \ref{general} is given by
\begin{equation}\notag
\bPh_k=\operatorname{Sign}\left(\bP_k^\top\M_k\bQ_k\right)=\frac{\bP_k^\top\M_k\bQ_k}{|\bP_k^\top\M_k\bQ_k|},\quad \fO_{k}=|\bP_k^\top\M_k\bQ_k|.
\end{equation}
Though in our analysis, we assume $\fO_{k,i,j}\geq \delta$. This assumption, however, does not always hold for the original sign update in SPlus. SOAP avoids this issue by introducing the stabilization parameter $\varepsilon$. We therefore focus on a truncated modification of the sign function in Algorithm \ref{gen-splus}:
\begin{equation}\label{SPlus-def}
\bPh_k=\frac{\bP_k^\top\M_k\bQ_k}{\fO_k},\qquad \fO_{k,i,j}=\max\left\{
\left|\bP_k^\top\M_k\bQ_k\right|_{i,j},\delta\right\}.
\end{equation}
We consider only the internal iterates and omit the output averaging step used in the original SPlus algorithm. A convergence guarantee for the averaged output would require an additional argument.

\section{Unified Convergence Analysis Framework}\label{converge}
\subsection{Bounding the Normalized Rotated Gradient}
We first establish unified properties for the general framework presented in Algorithm \ref{general}; the argument does not rely on any specific properties of $\fO_k$, except that it is entrywise positive with a lower bound. Following the proofs in \citep{Li-2025-nips}, we obtain the following lemma for Algorithm \ref{general}, which bounds the normalized rotated gradient in expectation.
\begin{lemma}\label{main1}
	Suppose that Assumptions 1-3 and condition $\fO_{k,i,j}\geq\delta$ hold. Let $\hat\sigma^2=\max\left\{\sigma^2,\frac{L\left(f(\X_1)-f^*\right)}{K\gamma^2}\right\}$ with any $\gamma\in(0,1]$, $1-\theta=\sqrt{\frac{L\left(f(\X_1)-f^*\right)}{K\hat\sigma^2}}$,  and $\eta=\sqrt{\frac{\delta^2\left(f(\X_1)-f^*\right)}{4LK\hat\sigma^2}}$.
	Then for the general Algorithm \ref{general}, we have
	\begin{equation}\notag
	\mathcal{S}_K :=\sum_{k=1}^K\E\left[\left\|\frac{\bP_k^\top\nabla f(\X_k)\bQ_k}{\fO_k^{\frac{1}{2}}}\right\|_F^2\right]\leq\frac{7}{\delta}\sqrt{K\hat\sigma^2L\left(f(\X_1)-f^*\right)}.
	\end{equation}
\end{lemma}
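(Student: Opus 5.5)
The plan is to run a descent-lemma argument directly on the rotated, normalized quantities, and then to control the momentum error with the standard recursion from \citep{Li-2025-nips}. Write $\A_k=\bP_k^\top\nabla f(\X_k)\bQ_k$, $\mathbf{E}_k=\M_k-\nabla f(\X_k)$ and $\Delta_k=\bP_k\bPh_k\bQ_k^\top$, so that $\X_{k+1}=\X_k-\eta\Delta_k$.

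\emph{Descent step.} By Assumption 1 we have $f(\X_{k+1})\le f(\X_k)-\eta\<\nabla f(\X_k),\Delta_k\>+\frac{L\eta^2}{2}\|\Delta_k\|_F^2$. Orthogonality of $\bP_k,\bQ_k$ gives
\[
\<\nabla f(\X_k),\Delta_k\>=\Big\<\A_k,\tfrac{\A_k+\bP_k^\top\mathbf{E}_k\bQ_k}{\fO_k}\Big\>=\Big\|\tfrac{\A_k}{\fO_k^{1/2}}\Big\|_F^2+\Big\<\tfrac{\A_k}{\fO_k^{1/2}},\tfrac{\bP_k^\top\mathbf{E}_k\bQ_k}{\fO_k^{1/2}}\Big\>.
\]
Young's inequality bounds the cross term by $\frac12\|\A_k/\fO_k^{1/2}\|_F^2+\frac{1}{2\delta}\|\mathbf{E}_k\|_F^2$. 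This uses $\fO_{k,i,j}\ge\delta$ and Frobenius unitary invariance.

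Similarly, $\|\Delta_k\|_F^2=\|\bP_k^\top\M_k\bQ_k/\fO_k\|_F^2\le\frac1\delta\|\bP_k^\top\M_k\bQ_k/\fO_k^{1/2}\|_F^2\le\frac2\delta\big(\|\A_k/\fO_k^{1/2}\|_F^2+\frac1\delta\|\mathbf{E}_k\|_F^2\big)$.

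All of these inequalities hold pathwise. The fact that $\bP_k,\bQ_k,\fO_k$ may depend on $\G_k$ therefore causes no trouble: no conditional expectation is ever taken through them. The prescribed $\eta$ makes $L\eta/\delta$ small, for instance $L\eta\le\delta/4$ when $K$ is large relative to the $\hat\sigma$ normalization. With that, we get
\[
f(\X_{k+1})\le f(\X_k)-\tfrac{\eta}{4}\Big\|\tfrac{\A_k}{\fO_k^{1/2}}\Big\|_F^2+\tfrac{c\,\eta}{\delta}\|\mathbf{E}_k\|_F^2
\]
for an absolute constant $c$. Summing over $k$ and taking expectations gives $\frac{\eta}{4}\mathcal S_K\le f(\X_1)-f^*+\frac{c\eta}{\delta}\sum_k\E\|\mathbf{E}_k\|_F^2$.

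\emph{Momentum error.} The error satisfies
\[
\mathbf{E}_k=\theta\mathbf{E}_{k-1}+\theta\big(\nabla f(\X_{k-1})-\nabla f(\X_k)\big)+(1-\theta)\big(\G_k-\nabla f(\X_k)\big).
\]
The noise term is a martingale difference with respect to $\F_{k-1}$; Assumptions 2--3 are used only here. The standard unrolling then gives
\[
\sum_k\E\|\mathbf{E}_k\|_F^2\le \tfrac{\E\|\mathbf{E}_0\|^2\text{-type term}}{1-\theta}+(1-\theta)K\sigma^2+\tfrac{L^2\eta^2}{(1-\theta)^2}\sum_k\E\|\Delta_k\|_F^2 .
\]
With $\M_0=\0$, the initial term involves $\|\nabla f(\X_1)\|_F^2\le 2L(f(\X_1)-f^*)$. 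I then substitute the pathwise bound on $\|\Delta_k\|_F^2$. This produces a term $\frac{L^2\eta^2}{\delta(1-\theta)^2}\mathcal S_K$ and a term $\frac{L^2\eta^2}{\delta^2(1-\theta)^2}\sum\E\|\mathbf{E}_k\|^2$.

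\emph{Main obstacle.} The hardest step is closing this coupled system. The quantity $\frac{L^2\eta^2}{\delta^2(1-\theta)^2}$ must be at most about $1/4$ so that the error sum can be absorbed on the left. The chosen parameters give exactly $\frac{L\eta}{\delta(1-\theta)}=\frac12$. Hence the $\mathcal S_K$ feedback term is $\frac{\eta}{\delta}\cdot O(\delta)\,\mathcal S_K$ relative to the descent coefficient, and it should be absorbable into $\frac\eta4\mathcal S_K$ with careful constants. I expect to need some slack, e.g.\ Young with a weight other than $\frac12$, to make both absorptions work simultaneously.

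\emph{Final bookkeeping.} After absorption I plug in $1-\theta=\sqrt{L(f(\X_1)-f^*)/(K\hat\sigma^2)}$, which is at most $\gamma\le1$ by the choice of $\hat\sigma$. I also use $\sigma\le\hat\sigma$ and the given $\eta$. Each of the three contributions, namely $(f(\X_1)-f^*)/\eta$, $(1-\theta)K\hat\sigma^2/\delta$, and $L(f(\X_1)-f^*)/((1-\theta)\delta)$, then becomes a constant multiple of $\frac1\delta\sqrt{K\hat\sigma^2L(f(\X_1)-f^*)}$. Tracking the constants should give the stated factor $7$.
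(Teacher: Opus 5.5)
There is a genuine gap, and it sits at the step you flagged as the main obstacle: with the relations you propose, the coupled system for $\mathcal{S}_K$ and $\sum_k\E\|\mathbf{E}_k\|_F^2$ cannot be closed by any choice of constants or Young weights.

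First, $L\eta\le\delta/4$ is not available. The prescribed parameters give $L\eta/\delta=(1-\theta)/2$, which equals $\tfrac12$ when $\gamma=1$ and $\hat\sigma^2=L(f(\X_1)-f^*)/K$. With the actual value, your descent step reads
\[
f(\X_{k+1})\le f(\X_k)-\frac{\eta\theta}{2}\Big\|\frac{\A_k}{\fO_k^{1/2}}\Big\|_F^2+\frac{\eta(2-\theta)}{2\delta}\|\mathbf{E}_k\|_F^2 .
\]
Your momentum bound uses $\frac{L^2\eta^2}{(1-\theta)^2}=\frac{\delta^2}{4}$ and the pathwise bound on $\|\Delta_k\|_F^2$. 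After absorption it gives
\[
\sum_k\E\|\mathbf{E}_k\|_F^2\le\frac{4L(f(\X_1)-f^*)}{1-\theta}+2K(1-\theta)\sigma^2+\delta\,\mathcal{S}_K .
\]
Substituting this into the summed descent inequality, the net coefficient of $\mathcal{S}_K$ on the left is $\frac{\eta\theta}{2}-\frac{\eta(2-\theta)}{2}=-\eta(1-\theta)<0$, so no bound follows.

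This is structural, not a matter of slack. Young with weight $\alpha$ gains at most $(1-\frac\alpha2)\eta$ per unit of $\|\A_k/\fO_k^{1/2}\|_F^2$. It charges at least $\frac{\eta}{2\alpha\delta}$ per unit of $\|\mathbf{E}_k\|_F^2$. The feedback coefficient $\delta$ cannot be lowered by reweighting the triangle inequality. Since $1-\frac\alpha2-\frac1{2\alpha}=-\frac{(1-\alpha)^2}{2\alpha}\le 0$, the gain never exceeds the charge.

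Concretely, take $\mathcal{S}_K=t$, $\sum_k\E\|\mathbf{E}_k\|_F^2=\delta t$, and $\sum_k\E\|\bP_k^\top\M_k\bQ_k/\fO_k^{1/2}\|_F^2=4t$. These values satisfy every relation in your scheme (the Cauchy--Schwarz form of descent, the momentum recursion, and the triangle inequality) for all $t>0$. Keeping the factor $\theta^2$ that the recursion actually carries only leaves a margin of order $1-\theta$. That inflates the bound by a factor $1/(1-\theta)$ and destroys the rate.

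The loss comes from Young/Cauchy--Schwarz on the cross term. Its worst case, $\bP_k^\top\mathbf{E}_k\bQ_k=-\A_k$, forces $\bP_k^\top\M_k\bQ_k=0$. That is incompatible with the worst case of your triangle-inequality bound on $\|\Delta_k\|_F^2$, but your argument treats the two independently.

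The missing idea is to keep, rather than discard, $b_k:=\bP_k^\top\M_k\bQ_k/\fO_k^{1/2}$. The paper uses the exact identity $-\langle a,b\rangle=-\frac12\|a\|_F^2-\frac12\|b\|_F^2+\frac12\|a-b\|_F^2$ with $a=\A_k/\fO_k^{1/2}$ and $b=b_k$. The smoothness term is at most $\frac{L\eta^2}{2\delta}\|b_k\|_F^2$, so with only $L\eta\le\delta/2$ (always true here) one gets
\[
f(\X_{k+1})-f(\X_k)\le-\frac\eta2\|a\|_F^2-\frac\eta4\|b_k\|_F^2+\frac{\eta}{2\delta}\|\mathbf{E}_k\|_F^2 .
\]
The momentum recursion, whose drift term is $\frac{L^2\eta^2}{\delta(1-\theta)}\|b_{k-1}\|_F^2$, is multiplied by $\frac{\eta}{2\delta(1-\theta)}$ and added. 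This yields the potential $f(\X_{k+1})-f^*+\frac{\eta\theta}{2\delta(1-\theta)}\|\nabla f(\X_k)-\M_k\|_F^2+\frac\eta4\|b_k\|_F^2$. The drift now costs $\frac{L^2\eta^3}{2\delta^2(1-\theta)^2}\|b_{k-1}\|_F^2=\frac\eta8\|b_{k-1}\|_F^2$. It is absorbed by the $\frac\eta4\|b_{k-1}\|_F^2$ retained from the previous step, so it is never routed back through $\mathcal{S}_K$.

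Telescoping gives $\mathcal{S}_K\le\frac2\eta(f(\X_1)-f^*)+\frac{2L(f(\X_1)-f^*)}{\delta(1-\theta)}+\frac{K(1-\theta)\hat\sigma^2}{\delta}$. These three terms contribute $4+2+1=7$ times $\frac1\delta\sqrt{K\hat\sigma^2L(f(\X_1)-f^*)}$. Your route through $\delta\mathcal{S}_K$ does appear in the paper, in Lemma~\ref{error-bounding}, but only after Lemma~\ref{main1} has been established independently.
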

In contrast to \citep{Li-2025-nips}, we handle the difficulty posed by orthogonal transformations between the original space and rotated space by exploiting the property $\|\bP^\top\X\bQ\|=\|\X\|$ whenever $\bP$ and $\bQ$ are orthogonal, for any unitarily invariant norm $\|\cdot\|$, as is the case for the nuclear norm and the Frobenius norm. In other words, the Frobenius and nuclear norms of $\X$ are preserved under rotation from the original space to the rotated space.

\subsection{Row-Column Trace-controlling}
If we strictly follow the argument in \citep{Li-2025-nips}, we obtain a convergence rate order of $\bO(\frac{\sqrt{mn}}{K^{1/4}})$ measured by nuclear norm:
$$
\frac{1}{K}\sum_{k=1}^K\E\left[\left\|\nabla f(\X_k)\right\|_*\right]\leq\bO\left(\sqrt{mn}\sqrt[4]{\frac{\hat{\sigma}^2L\left(f(\X_1)-f^*\right)}{K}}\right).
$$
The dimension factor is larger by a factor of $\sqrt{\frac{mn}{m+n}}$ than the convergence rate of Shampoo \citep{shampoo-li-26}. Upon closer inspection, the problematic $\sqrt{mn}$ term stems from the use of an elementwise upper bound. This is not an issue in the analysis of Adam in \citep{Li-2025-nips}, as that work focuses on vector-based optimizers. Matrix-based optimizers, by contrast, require a different analysis that exploits the matrix structure, especially SOAP, which employs an Adam-type update in the rotated space. To address this issue, we propose a row-column trace-control argument that separately controls the row-wise and column-wise maxima. Namely, denote
\begin{eqnarray}
\begin{aligned}\label{define-B-C}
	b_{k, i}\geq\max _{1 \leq j \leq n}\fO_{k,i,j},& \quad \mathbf{B}_{k}= \operatorname{diag}\left(b_{k, 1}, b_{k, 2}, \ldots b_{k, m}\right),  \\
	c_{k, j}\geq\max _{1 \leq i \leq m}\fO_{k,i,j},& \quad  \mathbf{C}_{k}= \operatorname{diag}\left(c_{k, 1}, c_{k, 2} \cdots c_{k, n}\right). 
\end{aligned}
\end{eqnarray}
With the control matrices defined above, we obtain the following lemma. The idea is that introducing control matrices allows the desired convergence rate for the nuclear norm of the gradient to decompose into (i) a term $\mathcal{S}_k$, already bounded in Lemma \ref{main1}, and (ii) the traces of the two control matrices, $\E\left[\operatorname{tr} \mathbf{B}_{k}\right]$ and $\E\left[\operatorname{tr} \mathbf{C}_{k}\right]$. Thus, we reduce the problem to bounding these traces for different optimizers. 
\begin{lemma}\label{lemma-nuclear}
For the general Algorithm \ref{general} and $\mathbf{B}_{k}$ and $\mathbf{C}_{k}$ defined in (\ref{define-B-C}), we have
\begin{equation}\label{nuclear}
\left(\sum_{k=1}^{K} \E \left[ \left\| \nabla f(\X_{k})\right\|_{*}\right]\right)^{2}\hspace*{-0.08cm}\leq\hspace*{-0.05cm} \frac{1}{2}\hspace*{-0.05cm} \left(\sum_{k=1}^{K}\E\left[\operatorname{tr}\mathbf{B}_{k}\right]\hspace*{-0.05cm}+\hspace*{-0.05cm}\sum_{k=1}^{K}\E\left[\operatorname{tr}\mathbf{C}_{k}\right]\right)\hspace*{-0.05cm}\cdot\hspace*{-0.05cm} \sum_{k=1}^K\E\hspace*{-0.05cm}\left[\left\|\frac{\bP_k^\top\nabla f(\X_k)\bQ_k}{\fO_k^{\frac{1}{2}}}\right\|_F^2\right]\hspace*{-0.05cm}.
\end{equation}
\end{lemma}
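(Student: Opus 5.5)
Our plan is to prove a per-iteration bound and then combine the iterations with Cauchy--Schwarz. Fix $k$ and write $\A=\bP_k^\top\nabla f(\X_k)\bQ_k$. Since the nuclear norm is unitarily invariant, $\|\nabla f(\X_k)\|_*=\|\A\|_*$, so it is enough to work with $\A$ in the rotated space.

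The key per-iteration inequality we aim for is
\begin{equation}\notag
\|\A\|_*^2\leq \frac{1}{2}\left(\operatorname{tr}\mathbf{B}_k+\operatorname{tr}\mathbf{C}_k\right)\left\|\frac{\A}{\fO_k^{\frac{1}{2}}}\right\|_F^2 .
\end{equation}
To get it, we use the variational characterization $\|\A\|_*=\max_{\|\mathbf{W}\|_{op}\le 1}\operatorname{tr}(\mathbf{W}^\top\A)=\operatorname{tr}(\mathbf{U}\mathbf{V}^\top)^\top\A$, where $\A=\mathbf{U}\Sigma\mathbf{V}^\top$ is an SVD. Write $\mathbf{W}=\mathbf{U}\mathbf{V}^\top$. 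Then
\begin{equation}\notag
\|\A\|_*=\sum_{i,j}\mathbf{W}_{i,j}\A_{i,j}=\sum_{i,j}\left(\mathbf{W}_{i,j}\fO_{k,i,j}^{\frac{1}{2}}\right)\left(\A_{i,j}\fO_{k,i,j}^{-\frac{1}{2}}\right).
\end{equation}
Applying Cauchy--Schwarz gives $\|\A\|_*^2\le \sum_{i,j}\mathbf{W}_{i,j}^2\fO_{k,i,j}\cdot\|\A/\fO_k^{1/2}\|_F^2$. We then use the AM-type bound $\fO_{k,i,j}\le \min\{b_{k,i},c_{k,j}\}\le \frac{1}{2}(b_{k,i}+c_{k,j})$. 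This yields
\begin{equation}\notag
\sum_{i,j}\mathbf{W}_{i,j}^2\fO_{k,i,j}\le\frac12\sum_i b_{k,i}\|\mathbf{W}_{i,:}\|_2^2+\frac12\sum_j c_{k,j}\|\mathbf{W}_{:,j}\|_2^2 .
\end{equation}
Because $\|\mathbf{W}\|_{op}\le1$, each row and each column of $\mathbf{W}$ has Euclidean norm at most $1$. Hence the right-hand side is at most $\frac12(\operatorname{tr}\mathbf{B}_k+\operatorname{tr}\mathbf{C}_k)$, which proves the per-iteration inequality. This step is the heart of the row-column trace control: the operator-norm bound on the dual certificate replaces the elementwise bound, which is what would otherwise produce the $mn$ factor.

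Next we pass to expectations and sum over $k$. Taking square roots in the per-iteration inequality gives $\|\nabla f(\X_k)\|_*\le \sqrt{a_k}\sqrt{s_k}$, where $a_k=\frac12(\operatorname{tr}\mathbf{B}_k+\operatorname{tr}\mathbf{C}_k)$ and $s_k=\|\A/\fO_k^{1/2}\|_F^2$. Cauchy--Schwarz in expectation gives $\E[\sqrt{a_ks_k}]\le\sqrt{\E a_k}\sqrt{\E s_k}$. Cauchy--Schwarz over $k$ then gives $\sum_k\sqrt{\E a_k\,\E s_k}\le\sqrt{\sum_k\E a_k}\sqrt{\sum_k \E s_k}$. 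Squaring yields exactly (\ref{nuclear}). Equivalently, one can apply Cauchy--Schwarz once on the product space of $(k,\omega)$.

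The main obstacle is choosing the right weighting in the per-iteration step. A naive Hölder or elementwise-max argument gives $\max_{i,j}\fO_{k,i,j}\cdot\operatorname{rank}$ or $mn$ factors. The fix is to split the weight by $\fO_{k,i,j}\le\frac12(b_{k,i}+c_{k,j})$ and to exploit that the polar factor $\mathbf{U}\mathbf{V}^\top$ has unit-bounded rows and columns. The remaining steps are routine.
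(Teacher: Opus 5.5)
Your proof is correct, but it reaches the per-iteration estimate by a different route from the paper.

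\textbf{The paper's route.} It writes $\A=\bP_k^\top\nabla f(\X_k)\bQ_k$ as the sandwich $\mathbf{B}_k^{1/4}\,(\mathbf{B}_k^{-1/4}\A\,\mathbf{C}_k^{-1/4})\,\mathbf{C}_k^{1/4}$. It then applies the Schatten--H\"older inequality with exponents $(4,2,4)$, followed by H\"older over $(k,\omega)$ with the same exponents. This produces the geometric mean $\sqrt{\sum_k\E[\operatorname{tr}\mathbf{B}_k]\cdot\sum_k\E[\operatorname{tr}\mathbf{C}_k]}$. Only afterwards does it use AM--GM and the entrywise fact $\sqrt{b_{k,i}c_{k,j}}\geq\fO_{k,i,j}$.

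\textbf{Your route.} You use the polar factor $\mathbf{W}=\mathbf{U}\mathbf{V}^\top$ as a dual certificate and a weighted scalar Cauchy--Schwarz. You then split each entry arithmetically, $\fO_{k,i,j}\leq\frac{1}{2}(b_{k,i}+c_{k,j})$, and use that every row and column of a contraction has Euclidean norm at most $1$. A Cauchy--Schwarz over $(k,\omega)$ finishes the argument. Two small points: you only need the identity $\|\A\|_*=\operatorname{tr}(\mathbf{W}^\top\A)$ for this particular $\mathbf{W}$, and the parenthesization in $\operatorname{tr}(\mathbf{U}\mathbf{V}^\top)^\top\A$ should be fixed.

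\textbf{What each buys.} The paper's argument mirrors the two-sided preconditioner structure of the Shampoo analysis it adapts. Yours needs only nuclear/operator-norm duality and scalar Cauchy--Schwarz, and it shows exactly where the saving over $mn$ comes from.

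\textbf{A sharper consequence of your route.} Your computation only uses $\fO_{k,i,j}\leq\min\{b_{k,i},c_{k,j}\}$, so it actually gives
$\|\nabla f(\X_k)\|_*^2\leq\min\{\operatorname{tr}\mathbf{B}_k,\operatorname{tr}\mathbf{C}_k\}\cdot\|\A/\fO_k^{1/2}\|_F^2$.
Hence one-sided (row-only or column-only) control already suffices. Equivalently, this is Schatten--H\"older with exponents $(2,2)$ applied to $\mathbf{B}_k^{1/2}\cdot\mathbf{B}_k^{-1/2}\A$. Expressed in terms of $\mathcal{S}_K$, this is sharper than both the paper's geometric-mean bound and the stated arithmetic-mean bound. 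After rescaling $\varepsilon$ and $\delta$ accordingly, it would let the downstream theorems use $\min\{m,n\}$ in place of $m+n$.
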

We explain the rationale for choosing the control matrices to be of the row-wise and column-wise maxima diagonal form. As proved in Appendix \ref{appendix-a}, by employing unitary invariance of nuclear norm, Schatten-Hölder inequality, and the  Hölder's inequality, we obtain
\begin{eqnarray}
\begin{aligned}\notag
	\left(\sum_{k=1}^{K} \E \left[ \left\| \nabla f(\X_{k})\right\|_{*}\right]\right)^{2}\hspace*{-0.11cm}\leq\hspace*{-0.09cm}
	\sqrt{\sum_{k=1}^{K} \E\left[\operatorname{tr} \mathbf{B}_{k}\right] \cdot \sum_{k=1}^{K} \E\left[\operatorname{tr} \mathbf{C}_{k}\right]} \cdot \sum_{k=1}^{K} \E\left[\left\|\mathbf{B}_{k}^{-\frac{1}{4}} \bP_k^\top \nabla f(\X_{k})\bQ_k \mathbf{C}_{k}^{-\frac{1}{4}}\right\|_F^{2}\right]\hspace*{-0.05cm}. 
\end{aligned}
\end{eqnarray}
Then, for the last term, we have
\begin{eqnarray}
\begin{aligned}\notag
    \left\|\mathbf{B}_{k}^{-\frac{1}{4}} \bP_k^\top \nabla f(\X_{k})\bQ_k \mathbf{C}_{k}^{-\frac{1}{4}}\right\|_F^{2}=&\sum_{i=1}^{m} \sum_{j=1}^{n}\left(\frac{\left(\bP_k^\top \nabla f(\X_{k})\bQ_k\right)_{i,j}}{\sqrt[4]{b_{k,i}c_{k,j}}}\right)^{2}\\
    \leq&\sum_{i=1}^{m} \sum_{j=1}^{n}\left(\frac{\bP_k^\top \nabla f(\X_{k})\bQ_k}{\fO_k^{\frac{1}{2}}}\right)_{i,j}^{2}=\left\|\frac{\bP_k^\top\nabla f(\X_k)\bQ_k}{\fO_k^{\frac{1}{2}}}\right\|_F^2,
\end{aligned}
\end{eqnarray}
where we use the diagonal forms of $\mathbf{B}$ and $\mathbf{C}$, together with $b_{k, i}\geq\max _{1 \leq j \leq n}\fO_{k,i,j}$ and $c_{k, j}\geq\max _{1 \leq i \leq m}\fO_{k,i,j}$. We finally convert $\nabla f(\X_{k})$ into $\frac{\bP_k^\top\nabla f(\X_k)\bQ_k}{\fO_k^{\frac{1}{2}}}$.

The reason adding control matrices yields a sharper bound lies in the number of terms that must be controlled. In other words, the bound improves because row/column trace aggregation replaces $mn$ coordinate-wise controls with $m+n$ trace controls, providing more refined control than elementwise bounding. From this point on, we examine several algorithms through the lens of this framework.

\subsection{Algorithms with EMA-based Normalizers}
We say that an algorithm has EMA-based normalizers if its associated matrix $\fO_k$ necessarily incorporates EMA (exponential moving average) values in addition to the input $\bP_k^\top\M_k\bQ_k$. 

\subsubsection{SOAP}
SOAP is a canonical example of an EMA-based algorithm: Bt \eqref{SOAP-def}, 
$\fO_{k}=\sqrt{\V_k+\varepsilon}$, which involves the EMA term $\V_k$. By exploiting the recursive update of $\V_k$, the following lemma bounds the traces of the control matrices in SOAP.
\begin{lemma}\label{trace-control-soap}
	Suppose that Assumption 3 holds. Let $\hat\sigma^2=\max\left\{\sigma^2,\frac{L\left(f(\X_1)-f^*\right)}{K\gamma^2}\right\}$ with any $\gamma\in(0,1]$, $0<\beta< 1$ and $\V_0=\0$. 
	Then for generalized SOAP (Algorithm \ref{gsoap}), we have
    \begin{eqnarray}
	\begin{aligned}\label{soap-lemma}
		\sum_{k=1}^{K}\E\left[\operatorname{tr} \mathbf{B}_{k}\right]&\leq K\left(\sqrt{2m}\hat{\sigma}+m\sqrt{\varepsilon}\right)+4\sum_{k=1}^{K} \E\left[\left\|\frac{\bP_k^\top \nabla f(\X_{k})\bQ_k}{\sqrt[4]{\V_{k}+\varepsilon}}\right\|_{F}^{2}\right],\\
		\sum_{k=1}^{K}\E\left[\operatorname{tr} \mathbf{C}_{k}\right]& \leq K\left(\sqrt{2n}\hat{\sigma}+n\sqrt{\varepsilon}\right)+4\sum_{k=1}^{K} \E\left[\left\|\frac{\bP_k^\top \nabla f(\X_{k})\bQ_k}{\sqrt[4]{\V_{k}+\varepsilon}}\right\|_{F}^{2}\right].
	\end{aligned}
    \end{eqnarray}
\end{lemma}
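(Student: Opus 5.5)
The plan is to make a concrete choice of the control matrices, reduce their traces to the trace of the EMA matrix $\V_k$, and then split $\V_k$ into a noise part and a true-gradient part. I will take
\[
b_{k,i}=\max_{1\le j\le n}\sqrt{\V_{k,i,j}+\varepsilon},\qquad c_{k,j}=\max_{1\le i\le m}\sqrt{\V_{k,i,j}+\varepsilon},
\]
which satisfy (\ref{define-B-C}) with equality. I treat only $\mathbf{B}_k$; the bound for $\mathbf{C}_k$ follows by the symmetric argument, with $m$ replaced by $n$.

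\textbf{Step 1 (reduction to a trace of $\V_k$).} Using $\sqrt{x+\varepsilon}\le\sqrt x+\sqrt\varepsilon$, I get
\[
\operatorname{tr}\mathbf{B}_k\le m\sqrt\varepsilon+\sum_i\sqrt{\max_j\V_{k,i,j}}.
\]
Since the entries of $\V_k$ are nonnegative, $\max_j\V_{k,i,j}\le\sum_j\V_{k,i,j}$. Cauchy--Schwarz over the $m$ rows then gives
\[
\operatorname{tr}\mathbf{B}_k\le m\sqrt\varepsilon+\sqrt{m\textstyle\sum_{i,j}\V_{k,i,j}}.
\]
This is where the dimension drops from $mn$ to $m$.

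\textbf{Step 2 (unrolling and unitary invariance).} With $\V_0=\0$ the recursion unrolls to
\[
\sum_{i,j}\V_{k,i,j}=(1-\beta)\sum_{s=1}^k\beta^{k-s}\|\bP_s^\top\G_s\bQ_s\|_F^2=(1-\beta)\sum_{s=1}^k\beta^{k-s}\|\G_s\|_F^2 .
\]
The last equality uses unitary invariance of $\|\cdot\|_F$ and needs no independence between $\bP_s,\bQ_s$ and $\G_s$. Next I split $\|\G_s\|_F^2\le2\|\G_s-\nabla f(\X_s)\|_F^2+2\|\nabla f(\X_s)\|_F^2$. Taking expectations with Assumption 3 and $\sigma\le\hat\sigma$, the noise part contributes at most $\sqrt{2m}\,\hat\sigma$ per iteration, via $\sqrt{a+b}\le\sqrt a+\sqrt b$ and Jensen. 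Summed over $k$, this gives the $K(\sqrt{2m}\hat\sigma+m\sqrt\varepsilon)$ term.

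\textbf{Step 3 (the gradient part; main obstacle).} It remains to bound
\[
\sum_k\E\Bigl[\sqrt{2m(1-\beta)\textstyle\sum_s\beta^{k-s}\|\nabla f(\X_s)\|_F^2}\Bigr]
\]
by $4\sum_k\E\|\bP_k^\top\nabla f(\X_k)\bQ_k/(\V_k+\varepsilon)^{1/4}\|_F^2$. This is the delicate step, because a square root must be turned into a term that is linear in the normalized gradient. My first attempt is a case split per iteration on the true-gradient energy.

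If the EMA of $\|\nabla f\|_F^2$ is at most $\hat\sigma^2$, the square root is absorbed into the $\sqrt{2m}\hat\sigma$ term, at the cost of an adjustment of constants.

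Otherwise, I write $\sqrt{mA}\le mA/\sqrt{mA}$ and aim to lower bound the normalized gradient norm by
\[
\|g/(\V+\varepsilon)^{1/4}\|_F^2\ge\|g\|_F^2\big/\max_{i,j}\sqrt{\V_{ij}+\varepsilon},
\]
where $g=\bP_k^\top\nabla f(\X_k)\bQ_k$. I then compare $\max\V$ with $\operatorname{tr}\V$ and with the gradient EMA. This step must also handle that $\nabla f(\X_s)$ for $s<k$ is normalized with $\V_k$ rather than $\V_s$. I expect to use $\V_k\ge\beta^{k-s}\V_s$ and to reindex the double sum so that the geometric weights sum to at most $1/(1-\beta)$, which should produce the constant $4$.

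If this per-iteration comparison fails to be tight enough, the fallback is the argument of \citep{Li-2025-nips}. That argument bounds $\E\sqrt{\cdot}$ by conditioning on $\F_{k-1}$ and using $\sqrt{x}\le\frac{x}{2\sqrt y}+\frac{\sqrt y}{2}$ with $y$ the $\F_{k-1}$-measurable part of $\V_k$, applied row by row.
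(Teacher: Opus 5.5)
There is a genuine gap, and it comes from Step~1, not from the details of Step~3. The bound $\sum_i\sqrt{\max_j\V_{k,i,j}}\le\sqrt{m\sum_{i,j}\V_{k,i,j}}$ applies Cauchy--Schwarz over rows to all of $\V_k$. The true-gradient energy therefore picks up a factor $\sqrt m$, whereas in the lemma the gradient term has the dimension-free coefficient $4$. After the additive split $\sqrt{a+b}\le\sqrt a+\sqrt b$ in Step~2, you are left with $\sqrt{2m\cdot(\text{gradient EMA})}$, and Step~3 then asks for a false inequality.

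Concretely, take $\bP_k=\I_m$, $\bQ_k=\I_n$, $\sigma=0$, $\beta$ fixed, and $f(\X)=\frac L2\|\X-\X^*\|_F^2$ with $\X_1-\X^*$ supported on one entry. Take $\eta$ tiny, so that $\nabla f(\X_k)$ stays essentially a single entry of size $a$. Then:
\begin{itemize}
\item $\sum_{i,j}\V_{k,i,j}=(1-\beta^k)a^2$, so your Step~1 bound sums to about $K\sqrt m\,a+Km\sqrt\varepsilon$;
\item the true $\sum_k\operatorname{tr}\mathbf{B}_k$ is only about $Ka+Km\sqrt\varepsilon$;
\item the right-hand side of the lemma is about $4Ka+\sqrt{mK}\,a+Km\sqrt\varepsilon$, since $\hat\sigma^2=a^2/(2K)$ for $\gamma=1$.
\end{itemize}
For $\varepsilon$ negligible, $m>16$ and $K\gg m$, your intermediate bound already exceeds the target, so no case split or reindexing can finish the proof. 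The comparison $\sqrt m\,\|g\|_F\lesssim\|g\|_F^2/\max_{i,j}\sqrt{\V_{k,i,j}+\varepsilon}$ that you are aiming for fails whenever the rotated gradient is concentrated on few entries.

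Two smaller issues. First, absorbing the low-energy case into $\sqrt{2m}\hat\sigma$ changes the stated constant. Second, the conditioning argument of \citep{Li-2025-nips} relies on an $\F_{k-1}$-measurable surrogate for $\V_k$. That surrogate is not directly available when $\bP_k,\bQ_k$ depend on $\G_k$.

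What is missing is to separate signal from noise row by row, before aggregating over rows. The signal should then be linearized through the increment $\sqrt{N+S}-\sqrt N\le S/\sqrt{N+S}$, not through $\sqrt S$, so that the full control sits in the denominator. The paper does this as follows.
\begin{itemize}
\item It dominates $\V_{k,i,j}\le\bar\alpha_{k,i}$, where $\bar\alpha_{k,i}=\beta\bar\alpha_{k-1,i}+2(1-\beta)(\phi_{k,i}^2+\varphi_{k,i}^2)$. Here $\phi_{k,i}$ and $\varphi_{k,i}$ are the row maxima of $|\bP_k^\top\nabla f(\X_k)\bQ_k|$ and $|\bP_k^\top(\G_k-\nabla f(\X_k))\bQ_k|$.
\item It takes $b_{k,i}=\sqrt{\bar\alpha_{k,i}+\varepsilon}$ and the noise-only $n_{k,i}=\sqrt{\bar\varphi_{k,i}+\varepsilon}$.
\item Using $b_{k,i}^2\ge\beta b_{k-1,i}^2$, it obtains
\[
b_{k,i}-n_{k,i}\le\sqrt\beta\,(b_{k-1,i}-n_{k-1,i})+2(1-\beta)\frac{\phi_{k,i}^2}{b_{k,i}}.
\]
\end{itemize}
Unrolling this recursion normalizes each $\phi_{t,i}^2$ by its own $b_{t,i}$, which resolves the time-index mismatch you noticed. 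The bound $(1-\beta)\sum_{k\ge t}\beta^{(k-t)/2}\le 2$ gives the constant $4$, and $\phi_{t,i}^2/b_{t,i}\le\sum_j(\bP_t^\top\nabla f(\X_t)\bQ_t)_{i,j}^2/\sqrt{\V_{t,i,j}+\varepsilon}$ converts it to the lemma's gradient term.

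Only the noise part $\sum_{k,i}\E[n_{k,i}]$ goes through concavity over the $Km$ pairs, and there your Step~1/Step~2 reasoning is correct. It gives $K\sqrt{2m}\,\sigma+Km\sqrt\varepsilon$ via $\sum_i\varphi_{k,i}^2\le\|\G_k-\nabla f(\X_k)\|_F^2$. Since your choice satisfies $b_{k,i}=\max_j\sqrt{\V_{k,i,j}+\varepsilon}\le\sqrt{\bar\alpha_{k,i}+\varepsilon}$, the paper's argument applies verbatim to it.
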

\textbf{Design of the Control Matrices}: We focus on the row-controlling and the column construction is analogous. First, we begin designing the control matrix as follows:
\begin{gather*}
		\phi_{k, i}=\max _{1 \leq j \leq n}\left|\bP_k^\top \nabla f(\X_{k})\bQ_k\right|_{i, j}, \quad \varphi_{k,i}=\max _{1 \leq j \leq n}\left|\bP_k^\top \left(\G_{k}-\nabla f\left(\X_{k}\right)\right)\bQ_k\right|_{i, j},\\
		\bar{\alpha}_{k, i}=\beta \bar{\alpha}_{k-1, i}+2(1-\beta)\left(\phi_{k, i}^2+\varphi_{k,i}^2\right)\mbox{ with }\bar{\alpha}_{0, i}=0,\quad b_{k,i}=\sqrt{\bar{\alpha}_{k,i}+\varepsilon}.
\end{gather*}
Then from the recursion of $\V_k$ in Algorithm \ref{gsoap} and by induction, we have
\begin{gather*}
        \bar{\alpha}_{k,i}\geq \V_{k,i,j}\quad\mbox{and}\quad b_{k,i}\geq\sqrt{\V_{k,i,j}+\varepsilon}=\fO_{k,i,j}\mbox{ for all }j=1,2,\cdots,n.
\end{gather*}
Thus, the coefficients $b_{k,i}$ satisfy (\ref{define-B-C}). Thus it is a valid choice for the trace-control process. Next, we upper bound it. To exclude the noise variance of $b_k$, we further define
\begin{gather*}
        \bar{\varphi}_{k, i}=\beta \bar{\varphi}_{k-1, i}+2(1-\beta)\varphi_{k,i}^2\mbox{ with }\bar{\varphi}_{0, i}=0,\quad n_{k,i}=\sqrt{\bar{\varphi}_{k,i}+\varepsilon}.
\end{gather*}
Then, it is possible to construct a recursion inequality
$$
b_{k,i}-n_{k,i}\leq\sqrt{\beta}(b_{k-1,i}-n_{k-1,i})+2(1-\beta)\frac{\phi_{k,i}^2}{b_{k,i}}.
$$
This further gives an upper bound of $b_k$:
$$
	b_{k,i}\leq n_{k,i}+2(1-\beta) \sum_{t=1}^k \beta^{\frac{k-t}{2}}\frac{\phi_{t,i}^2}{b_{t,i}}.
	$$
Summing over $k$ and $i$ and taking expectation, the $\sum_{k=1}^K\sum_{i=1}^m \E[n_{k,i}]$ term converts into $\sqrt{2Km\sum_{k=1}^K\sum_{i=1}^m \E[\varphi_{k,i}^2]}+Km\sqrt{\varepsilon}$, while the latter $2(1-\beta) \sum_{k=1}^K\sum_{i=1}^m\sum_{t=1}^k \beta^{\frac{k-t}{2}}\E\left[\frac{\phi_{t,i}^2}{b_{t,i}}\right]$ turns out to be $4\sum_{k=1}^K\sum_{i=1}^m\E\left[\frac{\phi_{k,i}^2}{b_{k,i}}\right]$. By employing
\begin{align}
    \sum_{k=1}^K\sum_{i=1}^m\E\left[\varphi_{k,i}^2\right]&\leq \sum_{k=1}^K\sum_{i=1}^m\sum_{j=1}^n\E\left[\left(\bP_k^\top\left(\G_k-\nabla f(\X_{k})\right)\bQ_k\right)_{i,j}^2\right]\notag\\
    &=\sum_{k=1}^K\E\left[\left\|\bP_k^\top\left(\G_k-\nabla f(\X_{k})\right)\bQ_k\right\|_F^2\right]=\sum_{k=1}^K\E\left[\left\|\G_k-\nabla f(\X_{k})\right\|_F^2\right]\leq K\sigma^2\notag
\end{align}
and
\begin{align}
\sum_{k=1}^K\sum_{i=1}^m\frac{\phi_{k,i}^2}{b_{k,i}}\leq \sum_{k=1}^K\sum_{i=1}^m\sum_{j=1}^n\frac{\left(\bP_k^\top\nabla f(\X_{k})\bQ_k\right)_{i,j}^2}{b_{k,i}} \leq\sum_{k=1}^K\sum_{i=1}^m\sum_{j=1}^n\left(\frac{\bP_k^\top\nabla f(\X_{k})\bQ_k}{\sqrt[4]{\V_k+\varepsilon}}\right)_{i,j}^2,\notag
\end{align}
we have the conclusion. \hfill $\blacksquare$

With the above supporting lemmas, we get to the following theorem establishing the convergence of SOAP.
\begin{theorem}\label{SOAP}
Suppose that Assumptions 1-3 and condition $\sqrt{\V_{k, i, j}+\varepsilon}\geq\delta$
hold. Let $\hat\sigma^2=\max\left\{\sigma^2,\frac{L\left(f(\X_1)-f^*\right)}{K\gamma^2}\right\}$ with any $\gamma\in(0,1]$, $1-\theta=\sqrt{\frac{L\left(f(\X_1)-f^*\right)}{K\hat\sigma^2}}$, $0<\beta< 1$, $\eta=\sqrt{\frac{\delta^2\left(f(\X_1)-f^*\right)}{4LK\hat\sigma^2}}$, and $\varepsilon=\frac{\tau\hat{\sigma}^2}{m+n}$, where $\tau$ is a tuning factor with $\tau \in (0,1]$. Then for generalized SOAP (Algorithm \ref{gsoap}), we have
\begin{eqnarray}
\begin{aligned}\notag
\frac{1}{K}\sum_{k=1}^K\E\left[\left\|\nabla f(\X_k)\right\|_*\right] \leq \left(2\sqrt{m+n}+16\frac{\hat{\sigma}}{\delta}\right)\cdot \sqrt[4]{\frac{ \hat{\sigma}^{2} L\left(f\left(\mathbf{X}_{1}\right)-f^{*}\right)}{K}}.
\end{aligned}
\end{eqnarray} 
\end{theorem}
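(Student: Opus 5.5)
The plan is to chain Lemma \ref{lemma-nuclear}, Lemma \ref{trace-control-soap} and Lemma \ref{main1}. For generalized SOAP we have $\fO_k=\sqrt{\V_k+\varepsilon}$, so $\fO_k^{1/2}=\sqrt[4]{\V_k+\varepsilon}$. Hence the quantity appearing in Lemma \ref{trace-control-soap} is exactly $\mathcal S_K$ from Lemma \ref{main1}. The condition $\sqrt{\V_{k,i,j}+\varepsilon}\ge\delta$ is precisely the lower bound $\fO_{k,i,j}\ge\delta$ required there, and the parameter choices for $\theta$ and $\eta$ match. Therefore
\[
\mathcal S_K\le \frac{7}{\delta}\sqrt{K\hat\sigma^2L\left(f(\X_1)-f^*\right)}=:A .
\]

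First, add the two inequalities of Lemma \ref{trace-control-soap}, taking $\mathbf B_k,\mathbf C_k$ to be the specific control matrices constructed there. This gives
\[
\sum_k\E[\operatorname{tr}\mathbf B_k]+\sum_k\E[\operatorname{tr}\mathbf C_k]\le K\hat\sigma\left(\sqrt{2m}+\sqrt{2n}\right)+K(m+n)\sqrt\varepsilon+8\mathcal S_K .
\]
With $\varepsilon=\tau\hat\sigma^2/(m+n)$ and $\tau\le1$, we get $(m+n)\sqrt\varepsilon\le\sqrt{m+n}\,\hat\sigma$. Also $\sqrt{2m}+\sqrt{2n}\le 2\sqrt{m+n}$. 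So the trace sum is at most $3K\sqrt{m+n}\,\hat\sigma+8\mathcal S_K$.

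Next, plug this into Lemma \ref{lemma-nuclear}:
\[
\Big(\sum_k\E\|\nabla f(\X_k)\|_*\Big)^2\le\tfrac12\left(3K\sqrt{m+n}\,\hat\sigma+8A\right)A .
\]
Taking square roots and using $\sqrt{a+b}\le\sqrt a+\sqrt b$ yields
\[
\sum_k\E\|\nabla f(\X_k)\|_*\le\sqrt{\tfrac32 K\sqrt{m+n}\,\hat\sigma A}+2A .
\]
Now divide by $K$. The first term becomes
\[
\sqrt{\tfrac{21}{2}}\,\sqrt{\sqrt{m+n}}\cdot\sqrt[4]{\hat\sigma^2 L\left(f(\X_1)-f^*\right)/K}\cdot\sqrt{\hat\sigma/\delta}.
\]
The second term becomes
\[
\frac{14}{\delta}\sqrt{\hat\sigma^2L\left(f(\X_1)-f^*\right)/K}.
\]

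To reach the stated form, I would use AM--GM on the first term: $\sqrt{\sqrt{m+n}\cdot\hat\sigma/\delta}\le\tfrac12\left(\sqrt{m+n}\,c+\hat\sigma/(c\delta)\right)$, with $c$ chosen so that the coefficient of $\sqrt{m+n}$ is $2$. The second term needs care. Since $\hat\sigma^2\ge L(f(\X_1)-f^*)/(K\gamma^2)$ and $\gamma\le1$, we have $L(f(\X_1)-f^*)/K\le\hat\sigma^2$. Hence
\[
\sqrt{\hat\sigma^2 L\left(f(\X_1)-f^*\right)/K}\le\hat\sigma\cdot\sqrt[4]{\hat\sigma^2L\left(f(\X_1)-f^*\right)/K},
\]
which makes the second term at most $14\frac{\hat\sigma}{\delta}\sqrt[4]{\cdot}$. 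Collecting, the total $\hat\sigma/\delta$ coefficient is roughly $14$ plus a small constant from AM--GM, which should fit under $16$.

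The main obstacle is the constant bookkeeping in this last step. I need to check that $\sqrt{21/2}/2\cdot c\le2$ and $14+\sqrt{21/2}/(2c)\le16$ can hold simultaneously. Taking $c=4/\sqrt{21/2}\approx1.23$ gives $\sqrt{21/2}/(2c)=21/16\approx1.31$, so the $\hat\sigma/\delta$ coefficient is about $15.3<16$, and both constraints are satisfied. If the constants were tighter, I would instead bound more crudely before splitting.
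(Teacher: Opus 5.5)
Your proposal is correct and follows the paper's route: it chains Lemma \ref{lemma-nuclear}, Lemma \ref{trace-control-soap} and Lemma \ref{main1} in the same way, and uses the same $\hat\sigma^4\geq\hat\sigma^2L(f(\X_1)-f^*)/K$ trick on the second term. The only difference is constant bookkeeping: you keep the factor $\tfrac32$ and choose the AM--GM weight explicitly, reaching a $\hat\sigma/\delta$ coefficient of about $15.3$, whereas the paper rounds up to $2$, bounds $\sqrt{14}\leq 4$, and uses $4\sqrt{ab}\leq 2a+2b$ to land exactly on $16$.
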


When $\delta=\sqrt{\varepsilon}=\sqrt{\frac{\tau}{m+n}}\hat\sigma$, the right-hand side of the above convergence rate becomes 
\begin{align}
&\left(2+\frac{16}{\sqrt{\tau}}\right)\sqrt{m+n}\cdot \sqrt[4]{\frac{ \hat{\sigma}^{2} L\left(f\left(\mathbf{X}_{1}\right)-f^{*}\right)}{K}}\notag\\
&\qquad=\left(2+\frac{16}{\sqrt{\tau}}\right)\sqrt{m+n}\cdot \max\left\{\sqrt[4]{\frac{ \sigma^{2} L\left(f\left(\mathbf{X}_{1}\right)-f^{*}\right)}{K}},\sqrt{\frac{ L\left(f\left(\mathbf{X}_{1}\right)-f^{*}\right)}{K\gamma}}\right\},\notag
\end{align}
which is exactly the same order as Shampoo’s convergence rate \citep{shampoo-li-26}. If we further set $\tau=\Theta(1)$, then, since $\|\nabla f(\X_k)\|_F\leq\|\nabla f(\X_k)\|_*\leq\sqrt{\min\{m,n\}}\|\nabla f(\X_k)\|_F$ , this rate is comparable to the optimal convergence rate of SGD and matches the lower bound of stochastic nonconvex optimization \citep{Arjevani-2023-mp} under the ideal condition $\|\nabla f(\X_k)\|_*=\Theta(\sqrt{\min\{m,n\}})\|\nabla f(\X_k)\|_F$ and balanced $m$ and $n$. 

Nevertheless, even for modern industrial large language models, the dimensions $m$ and $n$ are not particularly large. For example, in GPT-3 with $175$ billion parameters, the QKV projection matrices have dimensions $m=n=12288$, while the weight matrices in the feed-forward layer have dimensions $(m,n)=(12288,49152)$ or $(49152,12288)$. So $\frac{\hat{\sigma}^2}{m+n}$ may not be a very small constant. Consequently, our parameter $\varepsilon=\frac{\tau\hat{\sigma}^2}{m+n}$ with $\tau=\Theta(1)$ is set larger than what is typically used in practice. However, choosing $\tau$ small would slow the theoretical convergence rate.

On the other hand, whenever $\V_{k, i, j}=\Theta(\frac{\hat{\sigma}^2}{m+n})$, we have $\delta^2=\Theta(\frac{\hat{\sigma}^2}{m+n})$ regardless of the value chosen for $\varepsilon$, and the convergence rate remains unaffected. This permits $\varepsilon$ to be chosen much smaller than $\delta^2$, consistent with practical settings.

\subsubsection{Adam}

Theorem \ref{SOAP} holds for arbitrary orthogonal matrices $\bP_k$ and $\bQ_k$. In particular, setting $\bP_k=\I_m$ and $\bQ_k=\I_n$ reduces SOAP to Adam when applied to problems whose parameters are matrices, so Theorem \ref{SOAP} also applies to Adam. This observation requires no separate convergence argument, as it follows directly from the general framework by choosing the identity matrices as the orthogonal bases. Thus, we establish a nuclear-norm convergence guarantee for Adam for the first time, whereas existing analyses of Adam under comparable assumptions typically report bounds in vector $\ell_1$ norm \citep{Li-2025-nips} or $\ell_2$ norm \citep{haochuanli-2023}. For example, when applied to matrix-valued parameters, the analysis in \citep{Li-2025-nips} establishes the following convergence rate for Adam
\begin{eqnarray}
\begin{aligned}\notag
	\frac{1}{K}\sum_{k=1}^K\E\left[\left\|\nabla f(\X_k)\right\|_1\right]\leq \bO\left(\sqrt{m n}\sqrt[4]{\frac{\hat{\sigma}^2L\left(f(\X_1)-f^*\right)}{K}}\right).
\end{aligned}
\end{eqnarray}
In contrast, our new convergence rate is measured by the nuclear norm on the left-hand side and reduces $\sqrt{mn}$ to $\sqrt{m+n}$ on the right-hand side, at the cost of requiring a larger value of $\varepsilon$; \citet{Li-2025-nips} sets only $\varepsilon=\frac{\hat{\sigma}^2}{mn}$. This leads to a striking finding: elementwise Adam attains a nuclear-norm convergence rate identical to that of AdamW-style Shampoo \citep{shampoo-li-26}.

\subsubsection{Conda}
Conda provides another useful illustration of our unified analysis framework: Theorem \ref{SOAP} applies directly to Conda with the choice of rotation matrices specified in Section \ref{conda-section}. More importantly, it shows that the row-column trace-control argument has a broad range of applicability, spanning simple choices such as that in Conda as well as more complicated ones such as in SOAP.

\subsection{Algorithms with Instantaneous normalizers}
In contrast to the types of algorithms discussed above, there exist algorithms that do not require EMA in $\fO_k$. We refer to these as algorithms with instantaneous normalizers. They have the favorable property that less data needs to be stored and processed, which helps alleviate the memory burden. 
\subsubsection{SPlus}
The formal definition of $\bPh_k$ and $\fO_k$ studied in this paper is shown in \eqref{SPlus-def}. Since no EMA values are involved, directly following the proofs of SOAP to form a recursion becomes difficult, which forces us to take a different route (see Appendix \ref{appendix-c} for more details). Nevertheless, we establish the convergence rate of the truncated SPlus in the following theorem.
\begin{theorem}\label{SPlusCon}
	Suppose that Assumptions 1-3
	hold. Let $\hat\sigma^2=\max\left\{\sigma^2,\frac{L\left(f(\X_1)-f^*\right)}{K\gamma^2}\right\}$ with any $\gamma\in(0,1]$, $1-\theta=\sqrt{\frac{L\left(f(\X_1)-f^*\right)}{K\hat\sigma^2}}$, $\eta=\sqrt{\frac{\delta^2\left(f(\X_1)-f^*\right)}{4LK\hat\sigma^2}}$, and $\delta=\frac{\tau\hat{\sigma}}{\sqrt{m+n}}$, where $\tau$ is a tuning factor with $\tau \in (0,1]$. Then for generalized truncated SPlus  (Algorithm \ref{gen-splus}), we have
	\begin{eqnarray}
	\begin{aligned}\notag
		\frac{1}{K}\sum_{k=1}^K\E\left[\left\|\nabla f(\X_k)\right\|_*\right] \leq \left(4+\frac{11}{\tau}\right)\sqrt{m+n}\cdot \sqrt[4]{\frac{ \hat{\sigma}^{2} L\left(f\left(\mathbf{X}_{1}\right)-f^{*}\right)}{K}}.
	\end{aligned}
	\end{eqnarray} 
\end{theorem}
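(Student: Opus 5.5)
The plan is to instantiate the general framework of Section \ref{converge} with $\fO_{k,i,j}=\max\{|\bP_k^\top\M_k\bQ_k|_{i,j},\delta\}$. This normalizer satisfies $\fO_{k,i,j}\geq\delta$ by construction, so Lemma \ref{main1} applies directly and gives
\[
\mathcal S_K\leq\frac{7}{\delta}\sqrt{K\hat\sigma^2L(f(\X_1)-f^*)}=\frac{7\sqrt{m+n}}{\tau\hat\sigma}\sqrt{K\hat\sigma^2L(f(\X_1)-f^*)}.
\]
By Lemma \ref{lemma-nuclear}, the theorem then reduces to showing
\[
\sum_k\E[\operatorname{tr}\mathbf B_k]+\sum_k\E[\operatorname{tr}\mathbf C_k]\lesssim K\hat\sigma\sqrt{m+n}+c\,\mathcal S_K .
\]
Once this holds, $(\sum_k\E\|\nabla f(\X_k)\|_*)^2\leq\frac12(\cdot)\,\mathcal S_K$ is a quadratic inequality in $\mathcal S_K$. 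Taking square roots then yields the claimed $(4+\frac{11}{\tau})\sqrt{m+n}\,\sqrt[4]{\hat\sigma^2L(f(\X_1)-f^*)/K}$, with the constants fixed by the bookkeeping.

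No EMA is available here, so instead of a recursion I would choose the control matrices directly. Take $b_{k,i}=\max_j\fO_{k,i,j}$ and $c_{k,j}=\max_i\fO_{k,i,j}$. Write $\nabla'_k=\bP_k^\top\nabla f(\X_k)\bQ_k$ and $\mathbf E'_k=\bP_k^\top(\M_k-\nabla f(\X_k))\bQ_k$, and set $\phi_{k,i}=\max_j|\nabla'_k|_{i,j}$ and $\psi_{k,i}=\max_j|\mathbf E'_k|_{i,j}$. Then
\[
b_{k,i}\leq\delta+\phi_{k,i}+\psi_{k,i}.
\]
The term $\phi_{k,i}$ is not directly dominated by $b_{k,i}$, so I would apply AM--GM in the form $\phi_{k,i}\leq \frac{b_{k,i}}2+\frac{\phi_{k,i}^2}{2b_{k,i}}$ and absorb $\frac{b_{k,i}}{2}$ into the left side. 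This gives
\[
b_{k,i}\leq 2\delta+\frac{\phi_{k,i}^2}{b_{k,i}}+2\psi_{k,i}.
\]
Summing over $i$ and using $b_{k,i}\ge\fO_{k,i,j}$ bounds $\sum_i\phi_{k,i}^2/b_{k,i}$ by $\|\nabla'_k/\fO_k^{1/2}\|_F^2$, which is exactly the summand of $\mathcal S_K$. Moreover $\sum_i\psi_{k,i}\leq\sqrt m\,\|\mathbf E'_k\|_F=\sqrt m\,\|\M_k-\nabla f(\X_k)\|_F$ by unitary invariance, and $2m\delta=2\tau\hat\sigma m/\sqrt{m+n}\leq2\tau\hat\sigma\sqrt{m+n}$. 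The columns are handled symmetrically, with $\sqrt n$ in place of $\sqrt m$.

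The remaining and main step is to show $\sum_k\E\|\M_k-\nabla f(\X_k)\|_F=\bO(K\hat\sigma)$. The standard momentum-error decomposition gives a noise contribution of order $\sqrt{1-\theta}\,\hat\sigma\leq\hat\sigma$ per step, plus a drift of order $\frac{\theta L}{1-\theta}\max_k\|\X_{k+1}-\X_k\|_F$. The parameter choices give $\frac{L\eta}{1-\theta}=\frac{\delta}{2}$, so the drift is $\frac{\delta}{2}\|\bPh_k\|_F$. The naive bound $\|\bPh_k\|_F\le\sqrt{mn}$ would leave a factor $\sqrt{mn/(m+n)}$, which is too large. I would try first to avoid it by bounding the drift in squared/averaged form, $\sum_k\|\X_{k+1}-\X_k\|_F^2=\eta^2\sum_k\|\bPh_k\|_F^2$. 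Since $|\bPh_{k,i,j}|\leq1$ and $|\bPh_{k,i,j}|^2\leq|\M'_{k,i,j}|^2/\fO_{k,i,j}\cdot\frac{1}{\delta}$, this quantity is controlled by $\frac{1}{\delta}\sum_k\|\M'_k/\fO_k^{1/2}\|_F^2$. That is precisely the descent quantity appearing, and already bounded, inside the proof of Lemma \ref{main1}. Combining via Cauchy--Schwarz over $k$ should then yield a drift total of $\bO(K\hat\sigma)$ plus a multiple of $\mathcal S_K$, with dimension dependence $\sqrt{m}$ (resp. $\sqrt n$). This closes the trace bound and, through the quadratic inequality above, the theorem.
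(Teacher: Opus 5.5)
Your skeleton matches the paper's. You use Lemma~\ref{main1} for $\mathcal S_K$ and Lemma~\ref{lemma-nuclear} for the reduction, and your control-matrix step is exactly Lemma~\ref{trace-control-splus}: the AM--GM absorption $b_{k,i}\le 2\delta+\phi_{k,i}^2/b_{k,i}+2\psi_{k,i}$, the row bound $\sum_i\psi_{k,i}\le\sqrt m\,\|\M_k-\nabla f(\X_k)\|_F$, and unitary invariance. Your $b_{k,i}=\max_j\fO_{k,i,j}$ is marginally smaller than the paper's $\max_j|\bP_k^\top\M_k\bQ_k|_{i,j}+\delta$ and obeys the same bound.

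The genuine difference is the momentum-error estimate. The paper (Lemma~\ref{error-bounding}) sums the \emph{squared} recursion of Lemma~\ref{momentum-bound-lemma}. It splits $\|\bP_k^\top\M_k\bQ_k/\fO_k^{1/2}\|_F^2\le 2\|\bP_k^\top\nabla f(\X_k)\bQ_k/\fO_k^{1/2}\|_F^2+\frac2\delta\|\M_k-\nabla f(\X_k)\|_F^2$ and absorbs the second piece using $\frac{L^2\eta^2}{\delta^2(1-\theta)^2}=\frac14$. It bounds the first piece by Lemma~\ref{main1} and finishes with Cauchy--Schwarz to get $\sqrt{13}K\hat\sigma$. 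You instead unroll the error \emph{linearly} into bias, martingale noise and drift. You control the drift via Cauchy--Schwarz and $\|\bPh_k\|_F^2\le\frac1\delta\|\bP_k^\top\M_k\bQ_k/\fO_k^{1/2}\|_F^2$. That is the same inequality as step (b) of Lemma~\ref{momentum-bound-lemma}, and it is what removes the $\sqrt{mn}$. The paper's route reuses its lemmas verbatim and gives the tighter constant. Yours separates noise from drift more transparently and avoids the absorption step.

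Two details in your version need repair.

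\begin{itemize}
\item $\sum_k\E\|\bP_k^\top\M_k\bQ_k/\fO_k^{1/2}\|_F^2$ is \emph{not} bounded in the proof of Lemma~\ref{main1} as written, because the $\frac\eta4$ terms telescope away exactly. With the prescribed $\eta$, however, $\frac{L^2\eta^3}{2\delta^2(1-\theta)^2}=\frac\eta8$. So $\frac\eta8\sum_k\E\|\bP_k^\top\M_k\bQ_k/\fO_k^{1/2}\|_F^2$ survives on the left side of the telescoped inequality. This gives the sum at most $\frac{28}{\delta}\sqrt{K\hat\sigma^2L(f(\X_1)-f^*)}$, and hence a drift total of at most $\sqrt7\,K\hat\sigma$.
\item $\M_0=\0$ adds a bias term $-\theta^k\nabla f(\X_1)$. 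Its total contribution is at most $\frac{\theta}{1-\theta}\sqrt{2L(f(\X_1)-f^*)}\le\sqrt{2K}\,\hat\sigma$.
\end{itemize}

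Altogether you get about $(1+\sqrt7+\sqrt2)K\hat\sigma$ instead of $\sqrt{13}K\hat\sigma$. Redoing the final AM--GM, the coefficient of $\hat\sigma/\delta$ becomes roughly $10.6$ instead of the paper's $10.06$. That is still below $11$, so the stated $(4+\frac{11}{\tau})\sqrt{m+n}$ survives. Since the theorem asserts explicit constants, though, you need to carry out this check rather than defer it to ``bookkeeping''.
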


\textbf{Design of the Control Matrices}: We only focus on the row-controlling. Define
$$
j_i^*=\argmax_{1\leq j \leq n}\left|\bP_k^\top\M_k\bQ_k\right|_{i,j},\quad b_{k,i}=\left|\bP_k^\top\M_k\bQ_k\right|_{i,j_i^*}+\delta\geq\max_{1 \leq j \leq n}\fO_{k,i,j}
$$
for (\ref{SPlus-def}). Therefore,
\begin{eqnarray}
\begin{aligned}\notag
b_{k,i}\leq& \left|\bP_k^\top\nabla f(\X_k)\bQ_k\right|_{i,j_i^*}+\left|\bP_k^\top\left(\M_k-\nabla f(\X_k) \right)\bQ_k\right|_{i,j_i^*}+\delta\\
\leq& \frac{\left|\bP_k^\top\nabla f(\X_k)\bQ_k\right|_{i,j_i^*}^2}{2b_{k,i}}+\frac{b_{k,i}}{2}+\left|\bP_k^\top\left(\M_k-\nabla f(\X_k) \right)\bQ_k\right|_{i,j_i^*}+\delta\\
\leq&\sum_{j=1}^n\frac{\left|\bP_k^\top\nabla f(\X_k)\bQ_k\right|_{i,j}^2}{2\fO_{k,i,j}}+\frac{b_{k,i}}{2}+\left\|\left(\bP_k^\top\left(\M_k-\nabla f(\X_k) \right)\bQ_k\right)_{i,:}\right\|_2+\delta,
\end{aligned}
\end{eqnarray}
which gives an upper bound of $b_{k,i}$. Similar to the discussion below Lemma \ref{trace-control-soap}, using $$
	\sum_{k=1}^K\E\left[\left\|\M_k-\nabla f(\X_k) \right\|_F\right]\leq \sqrt{13}K\hat{\sigma}
	$$ established in Lemma \ref{error-bounding}, we can finally obtain an upper bound on $\sum_{k=1}^{K}\E\left[\operatorname{tr} \mathbf{B}_{k}\right]$ as 
    \begin{eqnarray}
    \begin{aligned}\notag
		\sum_{k=1}^{K}\E\left[\operatorname{tr} \mathbf{B}_{k}\right]&\leq 2\sqrt{13m}K\hat{\sigma}+2Km\delta+\sum_{k=1}^K\E\left[\left\|\frac{\bP_k^\top\nabla f(\X_k)\bQ_k}{\fO_k^{\frac{1}{2}}}\right\|_F^2\right].
	\end{aligned}
    \end{eqnarray}
    \hfill $\blacksquare$

\section{Conclusion}

This paper provides a unified convergence analysis framework for rotation-based matrix optimizers, including SOAP, Conda, and truncated SPlus. The framework permits arbitrary orthogonal rotations that may depend on the current stochastic gradient andyields convergence bounds in terms of the nuclear norm of the gradient for generalized SOAP, Conda, truncated SPlus, and matrix-parameterized Adam. Technically, our main approach leverages a row-column trace-control argument that decomposes the control term into row-wise and column-wise maxima and converts elementwise bounds into bounds on two diagonal control matrices. The framework can be applied to other rotation-based updates whenever their normalizers admit a uniform positive lower bound and suitable row and column trace bounds.

\textbf{Limitations.} Our theorems rely on parameter choices that depend on problem-dependent quantities, such as the initial optimality gap and the noise bound. Our analysis requires a uniform positive lower bound on $\fO_k$. Although SOAP and the truncated SPlus satisfy this condition, it imposes an additional restriction on other rotation-based optimizers, such as the untruncated SPlus. The SPlus result pertains to its internal iterates; extending the analysis to averaged outputs remains open.

\bibliography{iclr2027_conference}
\bibliographystyle{icml2020}

\newpage

\appendix

\section{Proofs for the General Framework}\label{appendix-a}
We first give the H\"older's inequality and Schatten-H\"older inequality, as well as two standard identities for Schatten norms.

\begin{lemma}[H\"older's Inequality]
    For all $x_1,x_2,\cdots,x_n,y_1,y_2,\cdots,y_n\geq0$, if $\frac{1}{p}+\frac{1}{q}=1$, we have
    $$
        \sum_{i=1}^n x_iy_i \leq \left(\sum_{i=1}^n x_i^p\right)^{\frac{1}{p}}\cdot\left(\sum_{i=1}^n y_i^q\right)^{\frac{1}{q}}.
    $$
\end{lemma}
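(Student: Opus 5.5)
We prove Hölder's inequality by the standard weighted AM--GM (Young) argument; the cases $p=1$ or $q=1$ (with the convention that the corresponding factor is a maximum) are trivial, so assume $1<p,q<\infty$.

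\emph{Degenerate case.} Set $A=\left(\sum_{i=1}^n x_i^p\right)^{1/p}$ and $B=\left(\sum_{i=1}^n y_i^q\right)^{1/q}$. If $A=0$, then all $x_i=0$ because the $x_i$ are nonnegative. The left-hand side then vanishes and the inequality holds. The case $B=0$ is the same, so we may assume $A,B>0$.

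\emph{Young's inequality.} For $a,b\geq 0$ we use
$$ab\leq \frac{a^p}{p}+\frac{b^q}{q}.$$
To prove it, we may take $a,b>0$, since otherwise it is trivial. Concavity of $\log$ with weights $\frac1p+\frac1q=1$ gives
$$\log\!\left(\frac{a^p}{p}+\frac{b^q}{q}\right)\geq \frac1p\log a^p+\frac1q\log b^q=\log(ab).$$

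\emph{Normalize and sum.} Apply Young's inequality with $a=x_i/A$ and $b=y_i/B$, then sum over $i$:
$$\frac{1}{AB}\sum_{i=1}^n x_iy_i\leq \frac{1}{pA^p}\sum_{i=1}^n x_i^p+\frac{1}{qB^q}\sum_{i=1}^n y_i^q=\frac1p+\frac1q=1.$$
Multiplying through by $AB$ gives the claim.

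The only step needing care is the degenerate case where one of the norms vanishes, and it is immediate. The concavity step is routine, so we expect no real obstacle.
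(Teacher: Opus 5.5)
Your proof is correct and complete. The degenerate case $A=0$ or $B=0$, the derivation of Young's inequality from concavity of $\log$, and the normalization $a=x_i/A$, $b=y_i/B$ followed by summation are all sound. The paper gives no proof of this lemma; it quotes H\"older's inequality as a standard fact. So there is no paper argument to compare with, and your Young's-inequality argument is the textbook route.

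Two small remarks follow.

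First, your restriction to $1<p,q<\infty$ is necessary, not just convenient. The lemma as literally stated assumes only $\frac1p+\frac1q=1$, and it fails for $p\in(0,1)$, $q<0$. For example, $p=\frac12$, $q=-1$, $x=(1,1)$, $y=(1,2)$ gives $3\le\frac83$, which is false. The intended hypothesis is conjugate exponents in $(1,\infty)$, or $[1,\infty]$ under your convention, and it is worth saying so explicitly.

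Second, where the paper actually invokes H\"older (the step labelled (c) in the proof of Lemma~\ref{lemma-nuclear}), it needs more than the stated lemma. It uses a three-factor version with exponents $(4,2,4)$, applied to $\sum_k\E[\cdot]$ rather than a finite sum. Your argument covers this with no new idea: normalize, use the weighted AM--GM bound $abc\le\frac{a^4}{4}+\frac{b^2}{2}+\frac{c^4}{4}$, and take $\sum_k\E[\cdot]$ in place of $\sum_i$. Alternatively, apply the two-factor inequality twice, with exponents $(4,\frac43)$ and then $(\frac32,3)$.
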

\begin{lemma}[Schatten-H\"older inequality]\citep{{rajendra-book}}
    For $\X\in \R^{m\times n}$, $\Y\in \R^{n\times s}$, and $\Z\in \R^{s\times t}$, if $\frac{1}{p}+\frac{1}{q}+\frac{1}{r}=1$, we have
    $$
        \left\|\X\Y\Z\right\|_{S_1}\leq \left\|\X\right\|_{S_p}\left\|\Y\right\|_{S_q}\left\|\Z\right\|_{S_r},
    $$
    where the Schatten-$p$ norm is defined as
    $$
          \|\X\|_{S_p}=\left\{\begin{array}{cc}
               \Big(\sum_{i=1}^{\min\{m,n\}}\sigma_i(\X)^p\Big)^{1/p}, & 1\leq p<\infty,\\
          \sigma_1(\X), & p=\infty.
           \end{array}\right.
    $$
\end{lemma}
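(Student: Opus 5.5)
The plan is to reduce the three-factor statement to a scalar H\"older inequality applied to singular values. The one genuinely matrix-theoretic input is Horn's log-majorization inequality for products. Throughout I pad the matrices with zero rows and columns so that all singular value sequences have a common length $N$, sorted in nonincreasing order with zeros appended. Padding changes none of the Schatten norms.

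\textbf{Step 1 (two-factor Schatten--H\"older with general exponent).} I will show that for $\frac{1}{s}=\frac{1}{q}+\frac{1}{r}$ with $s>0$,
$$\|\Y\Z\|_{S_s}\le\|\Y\|_{S_q}\|\Z\|_{S_r}.$$
The starting point is Horn's inequality
$$\prod_{i=1}^k\sigma_i(\Y\Z)\le\prod_{i=1}^k\sigma_i(\Y)\sigma_i(\Z)\quad\text{for all } k.$$
To prove it, write $\prod_{i\le k}\sigma_i(\A)$ as the operator norm of the $k$-th antisymmetric (compound) power $\wedge^k\A$. Then use multiplicativity $\wedge^k(\Y\Z)=\wedge^k\Y\,\wedge^k\Z$ and submultiplicativity of the operator norm.

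Log-majorization implies weak majorization of $\phi(\log\sigma_i)$ for every convex increasing $\phi$. Applying this with $\phi(t)=e^{st}$ gives
$$\sum_i\sigma_i(\Y\Z)^s\le\sum_i\big(\sigma_i(\Y)\sigma_i(\Z)\big)^s.$$
The right-hand side is bounded by the scalar H\"older inequality stated above, with exponents $q/s$ and $r/s$, which are conjugate. Taking $s$-th roots gives the claim. When $q$ or $r$ equals $\infty$, the case is handled directly by $\sigma_i(\Y\Z)\le\sigma_1(\Y)\sigma_i(\Z)$, which follows from the minimax characterization.

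\textbf{Step 2 (nuclear norm via von Neumann).} I write $\|\X\Y\Z\|_{S_1}=\sup_{\|\mathbf W\|_{op}\le1}\operatorname{tr}(\mathbf W^\top\X\Y\Z)$, using the duality of the nuclear and spectral norms. I then rewrite the trace cyclically as $\operatorname{tr}\big((\Y\Z\mathbf W^\top)\X\big)$ and apply von Neumann's trace inequality:
$$|\operatorname{tr}(\A\X)|\le\sum_i\sigma_i(\A)\sigma_i(\X).$$
Since $\sigma_i(\Y\Z\mathbf W^\top)\le\sigma_i(\Y\Z)\|\mathbf W\|_{op}$, this yields
$$\|\X\Y\Z\|_{S_1}\le\sum_i\sigma_i(\X)\sigma_i(\Y\Z).$$

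\textbf{Step 3 (combine).} Set $\frac{1}{s}=\frac1q+\frac1r=1-\frac1p$, so $s$ is the H\"older conjugate of $p$. The scalar H\"older inequality gives
$$\sum_i\sigma_i(\X)\sigma_i(\Y\Z)\le\|\X\|_{S_p}\|\Y\Z\|_{S_s},$$
and Step 1 then bounds $\|\Y\Z\|_{S_s}$ by $\|\Y\|_{S_q}\|\Z\|_{S_r}$. The degenerate exponents are handled separately. If $p=1$, then $s=\infty$ and I use $\sigma_i(\Y\Z)\le\sigma_1(\Y)\sigma_1(\Z)$ directly. If $p=\infty$, Step 2 with $\sigma_i(\X)\le\sigma_1(\X)$ reduces the claim to Step 1 with $s=1$.

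The main obstacle is Horn's inequality in Step 1, specifically justifying that $\prod_{i\le k}\sigma_i$ equals the operator norm of the compound matrix $\wedge^k\A$. If I want to avoid exterior algebra, the fallback is to cite it from \citep{rajendra-book}, where this lemma itself is taken from. The remaining steps are routine applications of von Neumann's trace inequality and scalar H\"older.
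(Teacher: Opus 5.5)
The paper does not prove this lemma; it cites it from Bhatia's \emph{Matrix Analysis}. Your proof is correct and self-contained, and it is essentially the textbook route. Horn's inequality via $\|\wedge^k\A\|_{op}=\prod_{i\le k}\sigma_i(\A)$ holds, as do weak majorization with $\phi(t)=e^{st}$, the conjugacy of $q/s$ and $r/s$, the duality/von Neumann bound $\|\X\Y\Z\|_{S_1}\le\sum_i\sigma_i(\X)\sigma_i(\Y\Z)$, and your handling of the endpoint cases $p=1$ and $p=\infty$.

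Since you already use Horn, you can drop Step 2 and the case analysis entirely. Functoriality $\wedge^k(\X\Y\Z)=\wedge^k\X\,\wedge^k\Y\,\wedge^k\Z$ gives $\prod_{i\le k}\sigma_i(\X\Y\Z)\le\prod_{i\le k}\sigma_i(\X)\sigma_i(\Y)\sigma_i(\Z)$ directly. Weak majorization with $\phi(t)=e^t$ then yields $\|\X\Y\Z\|_{S_1}\le\sum_i\sigma_i(\X)\sigma_i(\Y)\sigma_i(\Z)$. The three-term scalar H\"older inequality, which allows infinite exponents, finishes in one line.

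If instead you want to avoid Horn altogether, note that the paper only uses the case $\X=\mathbf{B}_k^{1/4}$, $\Z=\mathbf{C}_k^{1/4}$, with $\mathbf{B}_k,\mathbf{C}_k$ positive diagonal and $(p,q,r)=(4,2,4)$. There an elementary argument suffices:
\begin{itemize}
\item write $\|\A\|_*=\sup_{\|\mathbf{W}\|_{op}\le 1}\langle\mathbf{W},\A\rangle$ and move the diagonal factors onto $\mathbf{W}$;
\item apply Cauchy--Schwarz;
\item use that every row and column of a contraction has Euclidean norm at most $1$, together with $\sqrt{b_{k,i}c_{k,j}}\le\frac12(tb_{k,i}+c_{k,j}/t)$ optimized over $t>0$.
\end{itemize}
This gives $\|\A\|_*\le(\operatorname{tr}\mathbf{B}_k\operatorname{tr}\mathbf{C}_k)^{1/4}\|\mathbf{B}_k^{-1/4}\A\mathbf{C}_k^{-1/4}\|_F$, which is exactly the per-iteration bound the paper needs. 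This route uses the row/column structure the paper exploits elsewhere, but it covers only that special case, whereas your argument gives the general lemma.

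A cosmetic point: your exponent $s$ collides with the inner dimension $s$ in $\Y\in\R^{n\times s}$.
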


\begin{lemma}\label{trace-transform}
    For any diagonal matrix $\X\in\mathbb R^{n\times n}$ with positive entries, $\Y\in\mathbb R^{m\times n}$, and $p>1$, we have
    $$\left\|\X^{\frac{1}{p}}\right\|_{S_{p}}^{p}=\operatorname{tr} \X,\qquad \|\Y\|_{S_{2}}^{2}=\|\Y\|_F^2.$$
\end{lemma}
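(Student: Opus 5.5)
Both identities will follow by writing each Schatten norm as a sum of singular values and then identifying those singular values explicitly. No earlier result of the paper is needed, only the definition of $\|\cdot\|_{S_p}$ given in the Schatten--H\"older lemma.

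\textbf{First identity.} Let $\X=\operatorname{diag}(x_1,\dots,x_n)$ with every $x_i>0$. Since the power $\X^{\frac{1}{p}}$ is taken elementwise, it equals the diagonal matrix $\operatorname{diag}(x_1^{1/p},\dots,x_n^{1/p})$. The off-diagonal zeros stay zero because $0^{1/p}=0$ for $p>1$. This matrix is symmetric positive definite, so its singular values are exactly its diagonal entries $x_i^{1/p}$, possibly in a different order. Hence
\[
\left\|\X^{\frac{1}{p}}\right\|_{S_p}^p=\sum_{i=1}^n\left(x_i^{1/p}\right)^p=\sum_{i=1}^n x_i=\operatorname{tr}\X .
\]
The only subtle point is to confirm that the elementwise and matrix powers agree for diagonal matrices with positive entries. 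They do, so no ambiguity in the notation arises.

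\textbf{Second identity.} For arbitrary $\Y\in\R^{m\times n}$, I would use the fact that the squared singular values of $\Y$ are the nonzero eigenvalues of $\Y^\top\Y$. Together with zeros, these make up its spectrum. Therefore
\[
\|\Y\|_{S_2}^2=\sum_{i=1}^{\min\{m,n\}}\sigma_i(\Y)^2=\sum_{i}\lambda_i(\Y^\top\Y)=\operatorname{tr}(\Y^\top\Y)=\sum_{i,j}\Y_{i,j}^2=\|\Y\|_F^2 .
\]
Alternatively, one can take the SVD $\Y=\mathbf{U}\Sigma\mathbf{W}^\top$ and use invariance of the Frobenius norm under orthogonal factors. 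This gives $\|\Y\|_F=\|\Sigma\|_F$ directly.

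I expect no step to pose a real obstacle; the argument is routine.
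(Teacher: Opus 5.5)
Your proof is correct and follows essentially the same route as the paper. You identify the singular values of the diagonal matrix $\X^{\frac{1}{p}}$ with its entries $x_i^{1/p}$ to obtain $\operatorname{tr}\X$, and you write $\|\Y\|_{S_2}^2$ as the trace of a Gram matrix, which equals $\|\Y\|_F^2$ (the paper uses $\Y\Y^\top$ where you use $\Y^\top\Y$, which makes no difference).
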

\begin{proof}
    Recall that we denote $\sigma_i$ and $\lambda_i$ as the $i$-th singular value and eigenvalue, respectively. Therefore $\sigma_i\left(\X^{\frac{1}{p}}\right)=\sigma_i\left(\X\right)^{\frac{1}{p}}$ and $\sigma_i(\X)=\lambda_i(\X)$. So we have
    $$
        \left\|\X^{\frac{1}{p}}\right\|_{S_{p}}^{p}=\left(\sum_{i=1}^n \lambda_i\left(\X\right)^{\frac{1}{p}\cdot p}\right)^{\frac{1}{p}\cdot p}=\sum_{i=1}^n \lambda_i\left(\X\right)=\operatorname{tr} \X,
    $$
    and
    $$
        \left\|\Y\right\|_{S_{2}}^{2}=\left(\sum_{i=1}^{\min\{m,n\}} \sigma_i\left(\Y\right)^2\right)^{\frac{1}{2}\cdot 2}=\operatorname{tr} \Y\Y^\top=\|\Y\|_F^2.
    $$
    \hfill $\blacksquare$
\end{proof}
\begin{remark}
Throughout this paper, we define $\X^{\frac{1}{p}}$ as the element‑wise $\frac{1}{p}$-th power. To avoid having to define the matrix power, we prove the first equality only for diagonal $\X$, although it holds for any symmetric positive semi-definite matrix.
\end{remark}

\begin{definition}[Unitarily invariant norm]\citep{{rajendra-book}}
A norm is unitarily invariant if it satisfies $\|\bP\A\bQ^\top\|=\|\A\|$ for orthogonal matrices $\bP$ and $\bQ$. Both the Frobenius norm and nuclear norm are unitarily invariant norms.
\end{definition}

The proof of Lemma \ref{main1} presented below follows the technical framework of AdamW established in \citep{Li-2025-nips}. The difference lies in that we extend the argument to accommodate orthogonal rotations, which is the main difficulty in analyzing rotation-based optimizers such as SOAP.
\begin{proof}[Proof of Lemma \ref{main1}]
	As the gradient is $L$-Lipschitz, we have
	\begin{eqnarray}
	\hspace*{-3cm}\begin{aligned}\notag
		&f(\X_{k+1})-f(\X_k)\\
		\leq&\<\nabla f(\X_k),\X_{k+1}-\X_k\>+\frac{L}{2}\|\X_{k+1}-\X_k\|_F^2\\
		=&-\eta\<\nabla f(\X_k),\bP_k\frac{\bP_k^\top\M_k\bQ_k}{\fO_k}\bQ_k^\top\>+\frac{L\eta^2}{2}\left\|\bP_k\frac{\bP_k^\top\M_k\bQ_k}{\fO_k}\bQ_k^\top\right\|_F^2\\
		=&-\eta\<\bP_k^\top\nabla f(\X_k)\bQ_k,\frac{\bP_k^\top\M_k\bQ_k}{\fO_k}\>+\frac{L\eta^2}{2}\left\|\bP_k\frac{\bP_k^\top\M_k\bQ_k}{\fO_k}\bQ_k^\top\right\|_F^2
         \end{aligned}
	\end{eqnarray}
    \begin{eqnarray}
	\begin{aligned}\label{equ1}
        \overset{a}=&-\eta\<\frac{\bP_k^\top\nabla f(\X_k)\bQ_k}{\fO_k^{\frac{1}{2}}},\frac{\bP_k^\top\M_k\bQ_k}{\fO_k^{\frac{1}{2}}}\>+\frac{L\eta^2}{2}\left\|\frac{\bP_k^\top\M_k\bQ_k}{\fO_k}\right\|_F^2\\
		\overset{b}\leq&-\eta\<\frac{\bP_k^\top\nabla f(\X_k)\bQ_k}{\fO_k^{\frac{1}{2}}},\frac{\bP_k^\top\M_k\bQ_k}{\fO_k^{\frac{1}{2}}}\>+\frac{L\eta^2}{2\delta}\left\|\frac{\bP_k^\top\M_k\bQ_k}{\fO_k^{\frac{1}{2}}}\right\|_F^2\\
		=&-\frac{\eta}{2}\left\|\frac{\bP_k^\top\nabla f(\X_k)\bQ_k}{\fO_k^{\frac{1}{2}}}\right\|_F^2-\frac{\eta}{2}\left\|\frac{\bP_k^\top\M_k\bQ_k}{\fO_k^{\frac{1}{2}}}\right\|_F^2\\
		&+\frac{\eta}{2}\left\|\frac{\bP_k^\top\nabla f(\X_k)\bQ_k}{\fO_k^{\frac{1}{2}}}-\frac{\bP_k^\top\M_k\bQ_k}{\fO_k^{\frac{1}{2}}}\right\|_F^2+\frac{L\eta^2}{2\delta}\left\|\frac{\bP_k^\top\M_k\bQ_k}{\fO_k^{\frac{1}{2}}}\right\|_F^2\\
		\overset{c}\leq&-\frac{\eta}{2}\left\|\frac{\bP_k^\top\nabla f(\X_k)\bQ_k}{\fO_k^{\frac{1}{2}}}\right\|_F^2-\frac{\eta}{4}\left\|\frac{\bP_k^\top\M_k\bQ_k}{\fO_k^{\frac{1}{2}}}\right\|_F^2+\frac{\eta}{2\delta}\left\|\bP_k^\top\nabla f(\X_k)\bQ_k-\bP_k^\top\M_k\bQ_k\right\|_F^2\\
		\overset{d}=&-\frac{\eta}{2}\left\|\frac{\bP_k^\top\nabla f(\X_k)\bQ_k}{\fO_k^{\frac{1}{2}}}\right\|_F^2-\frac{\eta}{4}\left\|\frac{\bP_k^\top\M_k\bQ_k}{\fO_k^{\frac{1}{2}}}\right\|_F^2+\frac{\eta}{2\delta}\left\|\nabla f(\X_k)-\M_k\right\|_F^2,
	\end{aligned}
	\end{eqnarray}
	where we use the notation that $\frac{\X}{\Y}$ and $\X^{\frac{1}{2}}$ are the elementwise division and square-root in $\overset{a}=$, $\overset{b}\leq$, and $\overset{c}\leq$, the fact that $\|\cdot\|_F$ is a unitarily invariant norm such that $\|\bP\A\bQ^\top\|_F=\|\A\|_F$ for orthogonal matrices $\bP$ and $\bQ$ in $\overset{a}=$ and $\overset{d}=$, the condition $\fO_{k,i,j}\geq \delta$ in $\overset{b}\leq$ and $\overset{c}\leq$, and let $\eta\leq\frac{\delta}{2L}$ in $\overset{c}\leq$. Taking expectation on both sides of (\ref{equ1}) with respect to $\zeta_k$ conditioned on $\F_{k-1}$, multiplying both sides of (\ref{GM-dif-equ}) by $\frac{\eta}{2\delta(1-\theta)}$, and adding to (\ref{equ1}), we have
	\begin{eqnarray}
	\begin{aligned}\label{equ3}
		&\E_k\left[f(\X_{k+1})-f^*+\frac{\eta\theta}{2\delta(1-\theta)}\left\|\nabla f(\X_k)-\M_k\right\|_F^2+\frac{\eta}{4}\left\|\frac{\bP_k^\top\M_k\bQ_k}{\fO_k^{\frac{1}{2}}}\right\|_F^2\big| \F_{k-1}\right]\\
		\leq& f(\X_k)-f^*-\frac{\eta}{2}\E_k\left[\left\|\frac{\bP_k^\top\nabla f(\X_k)\bQ_k}{\fO_k^{\frac{1}{2}}}\right\|_F^2\big| \F_{k-1}\right]\\
		&+\frac{\eta\theta}{2\delta(1-\theta)}\left\|\M_{k-1}-\nabla f(\X_{k-1})\right\|_F^2+\frac{L^2\eta^3}{2\delta^2(1-\theta)^2}\left\|\frac{\bP_{k-1}^\top\M_{k-1}\bQ_{k-1}}{\fO_{k-1}^{\frac{1}{2}}}\right\|_F^2+\frac{\eta(1-\theta)\sigma^2}{2\delta}\\
		\leq& f(\X_k)-f^*-\frac{\eta}{2}\E_k\left[\left\|\frac{\bP_k^\top\nabla f(\X_k)\bQ_k}{\fO_k^{\frac{1}{2}}}\right\|_F^2\big| \F_{k-1}\right]\\
		&+\frac{\eta\theta}{2\delta(1-\theta)}\left\|\M_{k-1}-\nabla f(\X_{k-1})\right\|_F^2+\frac{\eta}{4}\left\|\frac{\bP_{k-1}^\top\M_{k-1}\bQ_{k-1}}{\fO_{k-1}^{\frac{1}{2}}}\right\|_F^2+\frac{\eta(1-\theta)\sigma^2}{2\delta},
	\end{aligned}
	\end{eqnarray}
	where we let $\eta^2\leq\frac{\delta^2(1-\theta)^2}{2L^2} $ in the last inequality. Taking expectation with respect to $\F_{k-1}$ and summing (\ref{equ1}) with $k=1$ and (\ref{equ3}) over $k=2,3,\cdots,K$, we have
	\begin{eqnarray}
	\begin{aligned}\notag
		&\E_{\F_K}\left[f(\X_{K+1})-f^*+\frac{\eta\theta}{2\delta(1-\theta)}\left\|\nabla f(\X_K)-\M_K\right\|_F^2+\frac{\eta}{4}\left\|\frac{\bP_K^\top\M_K\bQ_K}{\fO_K^{\frac{1}{2}}}\right\|_F^2\right]\\
		\leq& f(\X_1)-f^*-\frac{\eta}{2}\sum_{k=1}^K\E_{\F_{k}}\left[\left\|\frac{\bP_k^\top\nabla f(\X_k)\bQ_k}{\fO_k^{\frac{1}{2}}}\right\|_F^2\right]\\
		&+\left(\frac{\eta}{2\delta}+\frac{\eta\theta}{2\delta(1-\theta)}\right)\E_{\F_1}\left[\left\|\M_1-\nabla f(\X_1)\right\|_F^2\right]+ \frac{(K-1)\eta(1-\theta)\sigma^2}{2\delta}
    \end{aligned}
	\end{eqnarray}
    \begin{eqnarray}
	\hspace*{-2.5cm}\begin{aligned}\label{equ6}
		=&f(\X_1)-f^*-\frac{\eta}{2}\sum_{k=1}^K\E_{\F_{k}}\left[\left\|\frac{\bP_k^\top\nabla f(\X_k)\bQ_k}{\fO_k^{\frac{1}{2}}}\right\|_F^2\right]\\
		&+\frac{\eta}{2\delta(1-\theta)}\E_{\F_1}\left[\left\|\M_1-\nabla f(\X_1)\right\|_F^2\right]+\frac{(K-1)\eta(1-\theta)\sigma^2}{2\delta}.
	\end{aligned}
	\end{eqnarray}
	As the gradient is $L$-Lipschitz, we have
	\begin{eqnarray}
	\begin{aligned}\notag
		f^*&\leq f\left(\X-\frac{1}{L}\nabla f(\X)\right)\leq f(\X)-\frac{1}{L}\<\nabla f(\X),\nabla f(\X)\>+\frac{L}{2}\left\|\frac{1}{L}\nabla f(\X)\right\|_F^2\\
		&=f(\X)-\frac{1}{2L}\left\|\nabla f(\X)\right\|_F^2.
	\end{aligned}
	\end{eqnarray}
	Using the recursion of $\M_1$ and $\M_0=\0$, as well as Assumptions 2-3, we have
	\begin{eqnarray}
	\begin{aligned}\label{variancebound}
		\E_{\F_1}\left[\left\|\nabla f(\X_1)-\M_1\right\|_F^2\right]=&\E_{\F_1}\left[\left\|\theta\nabla f(\X_1)+(1-\theta)\left(\nabla f(\X_1)-\G_1\right)\right\|_F^2\right]\\
		=&\theta^2\left\|\nabla f(\X_1)\right\|_F^2+(1-\theta)^2\E_{\F_1}\left[\left\|\nabla f(\X_1)-\G_1\right\|_F^2\right]\\
		\leq& 2L\left(f(\X_1)-f^*\right) + (1-\theta)^2\sigma^2.
	\end{aligned}
	\end{eqnarray}
    Plugging into (\ref{equ6}), we have
	\begin{eqnarray}
	\begin{aligned}\notag
		&\E_{\F_K}\left[f(\X_{K+1})-f^*+\frac{\eta\theta}{2\delta(1-\theta)}\left\|\nabla f(\X_K)-\M_K\right\|_F^2+\frac{\eta}{4}\left\|\frac{\bP_K^\top\M_K\bQ_K}{\fO_K^{\frac{1}{2}}}\right\|_F^2\right]\\
		\leq&f(\X_1)-f^*-\frac{\eta}{2}\sum_{k=1}^K\E_{\F_{k}}\left[\left\|\frac{\bP_k^\top\nabla f(\X_k)\bQ_k}{\fO_k^{\frac{1}{2}}}\right\|_F^2\right]+\frac{L\eta}{\delta(1-\theta)}\left(f(\X_1)-f^*\right)+\frac{K\eta(1-\theta)\sigma^2}{2\delta}\\
		\leq&f(\X_1)-f^*-\frac{\eta}{2}\sum_{k=1}^K\E_{\F_{k}}\left[\left\|\frac{\bP_k^\top\nabla f(\X_k)\bQ_k}{\fO_k^{\frac{1}{2}}}\right\|_F^2\right]+\frac{L\eta}{\delta(1-\theta)}\left(f(\X_1)-f^*\right)+\frac{K\eta(1-\theta)\hat\sigma^2}{2\delta},
	\end{aligned}
	\end{eqnarray}
	where we denote $\hat\sigma^2=\max\left\{\sigma^2,\frac{L\left(f(\X_1)-f^*\right)}{K\gamma^2}\right\}$ with any $\gamma\in(0,1]$. So we have
	\begin{eqnarray}
	\begin{aligned}\notag
		\sum_{k=1}^K\E_{\F_{k}}\left[\left\|\frac{\bP_k^\top\nabla f(\X_k)\bQ_k}{\fO_k^{\frac{1}{2}}}\right\|_F^2\right]
        \leq&\frac{2}{\eta}\left(f(\X_1)-f^*\right)+\frac{2L}{\delta(1-\theta)}\left(f(\X_1)-f^*\right)+\frac{K(1-\theta)\hat\sigma^2}{\delta}\\
		=&\frac{7}{\delta}\sqrt{K\hat\sigma^2L\left(f(\X_1)-f^*\right)}
	\end{aligned}
	\end{eqnarray}
	by letting $1-\theta=\sqrt{\frac{L\left(f(\X_1)-f^*\right)}{K\hat\sigma^2}}$ and $\eta=\sqrt{\frac{\delta^2\left(f(\X_1)-f^*\right)}{4LK\hat{\sigma}^2}}$, which also satisfies $\eta^2=\frac{\delta^2(1-\theta)^2}{4L^2}\leq\frac{\delta^2(1-\theta)^2}{2L^2}$ and $\eta\leq\frac{\delta}{2L}$. \hfill $\blacksquare$
\end{proof}

We present the proof of Lemma \ref{lemma-nuclear} below, which follows the technical framework of AdamW-style Shampoo established in \citep{shampoo-li-26}. The difference lies in that we introduce the control matrices $\mathbf{B}_{k}$ and $\mathbf{C}_{k}$ and converte $\nabla f(\X_{k})$ into $\frac{\bP_k^\top\nabla f(\X_k)\bQ_k}{\fO_k^{\frac{1}{2}}}$.
\begin{proof}[Proof of Lemma \ref{lemma-nuclear}]
Recall that we define
\begin{align}
	b_{k, i}\geq\max _{1 \leq j \leq n}\fO_{k,i,j},& \quad \mathbf{B}_{k}= \operatorname{diag}\left(b_{k, 1}, b_{k, 2}, \ldots b_{k, m}\right), \notag \\
	c_{k, j}\geq\max _{1 \leq i \leq m}\fO_{k,i,j},& \quad  \mathbf{C}_{k}= \operatorname{diag}\left(c_{k, 1}, c_{k, 2} \cdots c_{k, n}\right). \notag
\end{align}
So we have
\begin{eqnarray}
\begin{aligned}\notag
	&\left(\sum_{k=1}^{K} \E \left[ \left\| \nabla f(\X_{k})\right\|_{*}\right]\right)^{2}\overset{a}=\left(\sum_{k=1}^{K} \E \left[ \left\| \bP_k^\top \nabla f(\X_{k})\bQ_k\right\|_{*}\right]\right)^{2}\\
	\overset{b}\leq&\left(\sum_{k=1}^{K} \E\left[\left\|\mathbf{B}_{k}^{\frac{1}{4}}\right\|_{S_{4}}\left\|\mathbf{B}_{k}^{-\frac{1}{4}} \bP_k^\top \nabla f(\X_{k})\bQ_k \mathbf{C}_{k}^{-\frac{1}{4}}\right\|_{S_{2}}\left\|\mathbf{C}_{k}^{\frac{1}{4}}\right\|_{S_{4}}\right]\right)^{2} \\
	\overset{c}\leq&\left(\sum_{k=1}^{K} \E\left[\left\|\mathbf{B}_{k}^{\frac{1}{4}}\right\|_{S_{4}}^{4}\right]\right)^{\frac{1}{2}} \cdot \sum_{k=1}^{K} \E\left[\left\|\mathbf{B}_{k}^{-\frac{1}{4}} \bP_k^\top \nabla f(\X_{k})\bQ_k \mathbf{C}_{k}^{-\frac{1}{4}}\right\|_{S_{2}}^{2}\right] \cdot \left(\sum_{k=1}^{K} \E\left[\left\|\mathbf{C}_{k}^{\frac{1}{4}}\right\|_{S_{4}}^{4}\right]\right)^{\frac{1}{2}}\\
	\overset{d}=&\sqrt{\sum_{k=1}^{K} \E\left[\operatorname{tr} \mathbf{B}_{k}\right] \cdot \sum_{k=1}^{K} \E\left[\operatorname{tr} \mathbf{C}_{k}\right]} \cdot \sum_{k=1}^{K} \E\left[\left\|\mathbf{B}_{k}^{-\frac{1}{4}} \bP_k^\top \nabla f(\X_{k})\bQ_k \mathbf{C}_{k}^{-\frac{1}{4}}\right\|_F^{2}\right] \\
    =&\sqrt{\sum_{k=1}^{K} \E\left[\operatorname{tr} \mathbf{B}_{k}\right] \cdot \sum_{k=1}^{K} \E\left[\operatorname{tr} \mathbf{C}_{k}\right]} \cdot \sum_{k=1}^{K} \E\left[\sum_{i=1}^{m} \sum_{j=1}^{n}\left(\frac{\left(\bP_k^\top \nabla f(\X_{k})\bQ_k\right)_{i,j}}{\sqrt[4]{b_{k,i}c_{k,j}}}\right)^{2}\right] \\
	\overset{e}\leq& \frac{1}{2} \left(\sum_{k=1}^{K}\E\left[\operatorname{tr} \mathbf{B}_{k}\right]+\sum_{k=1}^{K}\E\left[\operatorname{tr} \mathbf{C}_{k}\right]\right) \cdot \sum_{k=1}^{K} \E\left[\sum_{i=1}^{m} \sum_{j=1}^{n}\left(\frac{\bP_k^\top \nabla f(\X_{k})\bQ_k}{\fO_k^{\frac{1}{2}}}\right)_{i,j}^{2}\right] \\
	=&\frac{1}{2} \left(\sum_{k=1}^{K}\E\left[\operatorname{tr}\mathbf{B}_{k}\right]+\sum_{k=1}^{K}\E\left[\operatorname{tr}\mathbf{C}_{k}\right]\right)\cdot \sum_{k=1}^K\E\left[\left\|\frac{\bP_k^\top\nabla f(\X_k)\bQ_k}{\fO_k^{\frac{1}{2}}}\right\|_F^2\right],
\end{aligned}
\end{eqnarray}
where we use the fact that $\|\cdot\|_*$ is a unitarily invariant norm such that $\|\bP^\top\A\bQ\|_*=\|\A\|_*$ for orthogonal matrices $\bP$ and $\bQ$ in $\overset{a}=$, the Schatten-H\"older inequality and $\|\X\|_*=\|\X\|_{S_1}$ in $\overset{b}\leq$,  H\"older's inequality in $\overset{c}\leq$, Lemma \ref{trace-transform} in $\overset{d}=$, and $b_{k,i}c_{k,j}\geq \fO_{k,i,j}^2$ in $\overset{e}\leq$.\hfill $\blacksquare$ 
\end{proof}

The following lemma is similar to the corresponding lemma in \citep{Li-2025-nips} and we list the proof here only for the sake of completeness. 
\begin{lemma}\label{momentum-bound-lemma}
	Suppose that Assumptions 1-3 and condition $\fO_{k,i,j}\geq \delta$ hold. Then for Algorithm \ref{general}, for $k\geq 2$, we have
	\begin{eqnarray}
	\begin{aligned}\label{GM-dif-equ}
		&\E_k\left[\left\|\nabla f(\X_k)-\M_k\right\|_F^2\big|\F_{k-1}\right]\\
		\leq&\theta\left\|\M_{k-1}-\nabla f(\X_{k-1})\right\|_F^2+\frac{L^2\eta^2}{\delta(1-\theta)}\left\|\frac{\bP_{k-1}^\top\M_{k-1}\bQ_{k-1}}{\fO_{k-1}^{\frac{1}{2}}}\right\|_F^2+(1-\theta)^2\sigma^2.
	\end{aligned}
	\end{eqnarray}
\end{lemma}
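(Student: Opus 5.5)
Since $\M_k=\theta\M_{k-1}+(1-\theta)\G_k$, I would write the momentum error as
\begin{align*}
\nabla f(\X_k)-\M_k=\theta\left(\nabla f(\X_k)-\nabla f(\X_{k-1})\right)+\theta\left(\nabla f(\X_{k-1})-\M_{k-1}\right)+(1-\theta)\left(\nabla f(\X_k)-\G_k\right).
\end{align*}
The first two terms are $\F_{k-1}$-measurable, because $\X_k$ depends only on $\zeta_1,\dots,\zeta_{k-1}$. The last term has zero conditional mean by Assumption 2. So, after taking $\E_k[\cdot|\F_{k-1}]$, the cross term vanishes. Assumption 3 then bounds the noise part by $(1-\theta)^2\sigma^2$, leaving
\begin{align*}
\E_k\left[\left\|\nabla f(\X_k)-\M_k\right\|_F^2\big|\F_{k-1}\right]\leq \theta^2\left\|\nabla f(\X_k)-\nabla f(\X_{k-1})+\nabla f(\X_{k-1})-\M_{k-1}\right\|_F^2+(1-\theta)^2\sigma^2 .
\end{align*}

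For the deterministic part I would use Young's inequality $\|a+b\|^2\leq(1+\frac{1-\theta}{\theta})\|b\|^2+(1+\frac{\theta}{1-\theta})\|a\|^2$. This gives $\theta^2\|a+b\|^2\leq\theta\|b\|^2+\frac{\theta^2}{1-\theta}\|a\|^2\leq \theta\|b\|^2+\frac{1}{1-\theta}\|a\|^2$, which produces the coefficient $\theta$ on $\|\M_{k-1}-\nabla f(\X_{k-1})\|_F^2$ and $\frac{1}{1-\theta}$ on $\|\nabla f(\X_k)-\nabla f(\X_{k-1})\|_F^2$. By Assumption 1 the latter is at most $L^2\|\X_k-\X_{k-1}\|_F^2=L^2\eta^2\|\bP_{k-1}\frac{\bP_{k-1}^\top\M_{k-1}\bQ_{k-1}}{\fO_{k-1}}\bQ_{k-1}^\top\|_F^2$.

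The one step specific to the rotated setting is removing the rotation and passing to the $\fO^{1/2}$-normalized quantity. By unitary invariance of the Frobenius norm, the rotation disappears and we get $\|\frac{\bP_{k-1}^\top\M_{k-1}\bQ_{k-1}}{\fO_{k-1}}\|_F^2$. Since $\fO_{k-1,i,j}\geq\delta$ entrywise, each entry satisfies $\frac{x^2}{\fO^2}\leq\frac{1}{\delta}\frac{x^2}{\fO}$, which yields $\frac{1}{\delta}\|\frac{\bP_{k-1}^\top\M_{k-1}\bQ_{k-1}}{\fO_{k-1}^{1/2}}\|_F^2$. Collecting the three contributions gives exactly (\ref{GM-dif-equ}).

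No step is a real obstacle. The only points needing care are (i) checking that $\X_k$, and hence $\nabla f(\X_k)$, is $\F_{k-1}$-measurable even though $\bP_k,\bQ_k$ may depend on $\G_k$, which holds because $\X_k$ was formed from $\bP_{k-1},\bQ_{k-1}$ and $\M_{k-1}$; and (ii) choosing the Young weights so that the coefficient is exactly $\theta$.
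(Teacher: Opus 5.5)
Your proposal is correct and follows the paper's proof essentially step for step. It uses the same decomposition of $\M_k-\nabla f(\X_k)$, the same vanishing of the cross term via Assumption 2, the same Young weights $1+\frac{1-\theta}{\theta}$ and $1+\frac{\theta}{1-\theta}$, then $L$-smoothness, unitary invariance of $\|\cdot\|_F$, and the entrywise bound $\fO_{k-1,i,j}\geq\delta$. Your explicit check that $\X_k$ is $\F_{k-1}$-measurable even when $\bP_k,\bQ_k$ depend on $\G_k$ is a point the paper uses only implicitly.
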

\begin{proof}
	Denoting $\Gamma_k=\G_k-\nabla f(\X_k)$, we have $\E_k\left[\Gamma_k\big|\F_{k-1}\right]=0$ and $\E_k\left[\left\|\Gamma_k\right\|_F^2\big|\F_{k-1}\right]\leq\sigma^2$. From the update of $\M_k$, we have
	\begin{eqnarray}
	\begin{aligned}\notag
		\M_k-\nabla f(\X_k)=& \theta \M_{k-1} + (1-\theta)\G_k - \nabla f(\X_k)\\
		=&\theta\left(\M_{k-1}-\nabla f(\X_{k-1})\right)+(1-\theta)\left(\nabla f(\X_k)+\Gamma_k\right)-\nabla f(\X_k)+\theta\nabla f(\X_{k-1})\\
		=& \theta \left(\M_{k-1}-\nabla f(\X_{k-1})\right) + (1-\theta)\Gamma_k - \theta\left(\nabla f(\X_k)-\nabla f(\X_{k-1})\right)
		\end{aligned}
	\end{eqnarray}
	and
	\begin{eqnarray}
	\begin{aligned}\notag
	   &\E_k\left[\left\|\nabla f(\X_k)-\M_k\right\|_F^2\big|\F_{k-1}\right]\\
	   =&\left\|\theta \left(\M_{k-1}-\nabla f(\X_{k-1})\right) - \theta\left(\nabla f(\X_k)-\nabla f(\X_{k-1})\right)\right\|_F^2 + (1-\theta)^2\E_k\left[\left\|\Gamma_k\right\|_F^2\big|\F_{k-1}\right]\\
	   \leq&\theta^2\left(1+\frac{1-\theta}{\theta}\right)\left\|\M_{k-1}-\nabla f(\X_{k-1})\right\|_F^2+\theta^2\left(1+\frac{\theta}{1-\theta}\right)\left\|\nabla f(\X_k)-\nabla f(\X_{k-1})\right\|_F^2\\
	   &+(1-\theta)^2\E_k\left[\left\|\Gamma_k\right\|_F^2\big|\F_{k-1}\right]
    \end{aligned}
	\end{eqnarray}
    \begin{eqnarray}
	\hspace*{-0.7cm}\begin{aligned}\notag
	   \leq&\theta\left\|\M_{k-1}-\nabla f(\X_{k-1})\right\|_F^2+\frac{1}{1-\theta}\left\|\nabla f(\X_k)-\nabla f(\X_{k-1})\right\|_F^2+(1-    \theta)^2\sigma^2\\
	   \leq&\theta\left\|\M_{k-1}-\nabla f(\X_{k-1})\right\|_F^2+\frac{L^2}{1-\theta}\left\|\X_k-\X_{k-1}\right\|_F^2+(1-\theta)^2\sigma^2\\
	   =&\theta\left\|\M_{k-1}-\nabla f(\X_{k-1})\right\|_F^2+\frac{L^2\eta^2}{1-\theta}\left\|\bP_{k-1}\frac{\bP_{k-1}^\top\M_{k-1}\bQ_{k-1}}{\fO_{k-1}}\bQ_{k-1}^\top\right\|_F^2+(1-\theta)^2\sigma^2\\
	   \overset{a}=&\theta\left\|\M_{k-1}-\nabla f(\X_{k-1})\right\|_F^2+\frac{L^2\eta^2}{1-\theta}\left\|\frac{\bP_{k-1}^\top\M_{k-1}\bQ_{k-1}}{\fO_{k-1}}\right\|_F^2+(1-\theta)^2\sigma^2\\
	   \overset{b}\leq&\theta\left\|\M_{k-1}-\nabla f(\X_{k-1})\right\|_F^2+\frac{L^2\eta^2}{\delta(1-\theta)}\left\|\frac{\bP_{k-1}^\top\M_{k-1}\bQ_{k-1}}{\fO_{k-1}^{\frac{1}{2}}}\right\|_F^2+(1-\theta)^2\sigma^2,
	\end{aligned}
	\end{eqnarray}
    where we use the fact that $\|\cdot\|_F$ is a unitarily invariant norm such that $\|\bP\A\bQ^\top\|_F=\|\A\|_F$ for orthogonal matrices $\bP$ and $\bQ$ in $\overset{a}=$, the notation that $\frac{\X}{\Y}$ and $\X^{\frac{1}{2}}$ are the elementwise division and square-root and condition $\fO_{k,i,j}\geq \delta$ in $\overset{b}\leq$.\hfill $\blacksquare$
\end{proof} 

\section{Proofs for SOAP}\label{app:proof-soap}
We now give the proof of Lemma \ref{trace-control-soap}. Instead of directly following the proof in \citep{Li-2025-nips}, which makes use of an $\F_{k-1}$-measurable upper bound on $\widetilde{\V}_k$, we separate the main recursion term from the stochastic error term. This demonstrates how the error term affects the main term in a more straightforward way, and also provides an explicit recursion for both terms.
\begin{proof}[Proof of Lemma \ref{trace-control-soap}]
	Consider $\mathbf{B}_k$. Denote
	\begin{align*}
		\phi_{k, i}&=\max _{1 \leq j \leq n}\left|\bP_k^\top \nabla f(\X_{k})\bQ_k\right|_{i, j},  \\
		\varphi_{k,i}&=\max _{1 \leq j \leq n}\left|\bP_k^\top \left(\G_{k}-\nabla f\left(\X_{k}\right)\right)\bQ_k\right|_{i, j}.
	\end{align*}
	In order to upper bound $\V_{k,i,j}$, define
	\begin{align*}
		\bar{\alpha}_{k, i}&=\beta \bar{\alpha}_{k-1, i}+2(1-\beta)\left(\phi_{k, i}^2+\varphi_{k,i}^2\right),\\
		b_{k,i}&=\sqrt{\bar{\alpha}_{k,i}+\varepsilon},
	\end{align*}
	with $\bar{\alpha}_{0, i}=0$. Therefore,
	\begin{align*}
		\left(\bP_k^\top\G_{k}\bQ_k\right)_{i, j}^2&=\left(\bP_k^\top \nabla f\left(\X_{k}\right)\bQ_k+\bP_k^\top\left(\G_{k}-\nabla f\left(\X_{k}\right)\right)\bQ_k\right)_{i, j}^2\\
		&\leq 2\left(\left(\bP_k^\top \nabla f(\X_{k})\bQ_k\right)_{i, j}^2+\left(\bP_k^\top \left(\G_{k}-\nabla f\left(\X_{k}\right)\right)\bQ_k\right)_{i, j}^2\right)\\
		&\leq 2\left(\phi_{k, i}^2+\varphi_{k,i}^2\right) .
	\end{align*}
	Compared to the definition of $\V_{k,i,j}$, by induction and $\V_{0,i,j}=\bar{\alpha}_{0,i}=0$, we obtain,
	\begin{equation}\label{soap-a-b-bound}
	\bar{\alpha}_{k,i}\geq \V_{k,i,j}\quad\mbox{and}\quad b_{k,i}\geq\sqrt{\V_{k,i,j}+\varepsilon}\quad\mbox{for all }j=1,2,\cdots,n,
	\end{equation}
	Thus, the coefficients $b_{k,i}$ satisfy (\ref{define-B-C}) since $\fO_k=\sqrt{\V_k+\varepsilon}$ for SOAP. Thus, $b_{k,i}$ is a valid choice for the trace-control process in the general framework. Then we exclude the stochastic error by recording the average error term:
	\begin{align*}
		\bar{\varphi}_{k, i}&=\beta \bar{\varphi}_{k-1, i}+2(1-\beta)\varphi_{k,i}^2,\\
		n_{k,i}&=\sqrt{\bar{\varphi}_{k,i}+\varepsilon},
	\end{align*}
	with $\bar{\varphi}_{0, i}=0$. Therefore,
	\begin{align*}
		b_{k,i}-n_{k,i}&=\frac{\bar{\alpha}_{k,i}-\bar{\varphi}_{k,i}}{b_{k,i}+n_{k,i}}\\
		&=\beta\frac{\bar{\alpha}_{k-1,i}-\bar{\varphi}_{k-1,i}}{b_{k,i}+n_{k,i}}+2(1-\beta)\frac{\phi_{k,i}^2}{b_{k,i}+n_{k,i}}\\
		&\overset{a}\leq\sqrt{\beta}\frac{\bar{\alpha}_{k-1,i}-\bar{\varphi}_{k-1,i}}{b_{k-1,i}+n_{k-1,i}}+2(1-\beta)\frac{\phi_{k,i}^2}{b_{k,i}}\\
		&=\sqrt{\beta}(b_{k-1,i}-n_{k-1,i})+2(1-\beta)\frac{\phi_{k,i}^2}{b_{k,i}},
	\end{align*}
	where we use $b_{k,i}^2=\bar{\alpha}_{k,i}+\varepsilon\geq \beta\bar{\alpha}_{k-1,i}+\beta\varepsilon=\beta b_{k-1,i}^2$ and a similar inequality for $n_{k,i}$ in $\overset{a}\leq$. Using this recursion and $b_{0,i}=n_{0,i}=\sqrt{\varepsilon}$, we have
	$$
	b_{k,i}\leq n_{k,i}+2(1-\beta) \sum_{t=1}^k \beta^{\frac{k-t}{2}}\frac{\phi_{t,i}^2}{b_{t,i}},
	$$
	and in conclusion,
	\begin{align*}
		\sum_{k=1}^K \E\left[\operatorname{tr}\mathbf{B}_k\right] &= \sum_{k=1}^K\sum_{i=1}^m \E\left[b_{k,i}\right]\\
		&\leq\sum_{k=1}^K\sum_{i=1}^m \E\left[n_{k,i}\right]+2(1-\beta)\sum_{k=1}^K\sum_{i=1}^m\sum_{t=1}^k \beta^{\frac{k-t}{2}}\E\left[\frac{\phi_{t,i}^2}{b_{t,i}}\right]\\
		&=\sum_{k=1}^K\sum_{i=1}^m\E\left[\sqrt{\bar{\varphi}_{k,i}+\varepsilon}\right]+2(1-\beta)\sum_{k=1}^K\sum_{i=1}^m\sum_{t=1}^k \beta^{\frac{k-t}{2}}\E\left[\frac{\phi_{t,i}^2}{b_{t,i}}\right]\\
		&\overset{b}\leq\sqrt{Km}\sqrt{\sum_{k=1}^K\sum_{i=1}^m\E\left[\bar{\varphi}_{k,i}\right]+Km\varepsilon}+2(1-\beta)\sum_{k=1}^K\sum_{i=1}^m\sum_{t=1}^k \beta^{\frac{k-t}{2}}\E\left[\frac{\phi_{t,i}^2}{b_{t,i}}\right]\\
		&=\sqrt{Km}\sqrt{2(1-\beta)\sum_{k=1}^K\sum_{i=1}^m\sum_{t=1}^k\beta^{k-t}\E\left[\varphi_{t,i}^2\right]+Km\varepsilon}+2(1-\beta)\sum_{k=1}^K\sum_{i=1}^m\sum_{t=1}^k \beta^{\frac{k-t}{2}}\E\left[\frac{\phi_{t,i}^2}{b_{t,i}}\right]\\
		&\overset{c}\leq\sqrt{Km}\sqrt{2\sum_{k=1}^K\sum_{i=1}^m\E\left[\varphi_{k,i}^2\right]+Km\varepsilon}+4\sum_{k=1}^K\sum_{i=1}^m\E\left[\frac{\phi_{k,i}^2}{b_{k,i}}\right]\\
		&\overset{d}\leq\sqrt{2Km}\sqrt{\sum_{k=1}^K\sum_{i=1}^m\E\left[\varphi_{k,i}^2\right]}+Km\sqrt{\varepsilon}+4\sum_{k=1}^K\sum_{i=1}^m\sum_{j=1}^n\E\left[\left(\frac{\bP_k^\top\nabla f(\X_{k})\bQ_k}{\sqrt[4]{\V_k+\varepsilon}}\right)_{i,j}^2\right]\\
		&\overset{e}\leq K\sqrt{2m}\sigma+Km\sqrt{\varepsilon}+4\sum_{k=1}^{K}\E\left[\left\|\frac{\bP_k^\top\nabla f(\X_{k})\bQ_k}{\sqrt[4]{\V_k+\varepsilon}}\right\|_F^2\right],
	\end{align*}
	where $\overset{b}\leq$ uses the concavity of $\sqrt{x}$, $\overset{c}\leq$ uses the fact that
	$$
	(1-\beta)\sum_{k=1}^K\sum_{i=1}^m\sum_{t=1}^k\beta^{k-t}\E\left[\varphi_{t,i}^2\right]
	=(1-\beta)\sum_{t=1}^K\sum_{i=1}^m\left(\E\left[\varphi_{t,i}^2\right]\sum_{k=t}^{K}\beta^{k-t}\right)
	\leq\sum_{k=1}^K\sum_{i=1}^m\E\left[\varphi_{k,i}^2\right],
	$$
	as well as,
	$$
	(1-\beta)\sum_{k=1}^K\sum_{i=1}^m\sum_{t=1}^k \beta^{\frac{k-t}{2}}\E\left[\frac{\phi_{t,i}^2}{b_{t,i}}\right]
	= (1-\beta)\sum_{t=1}^K\sum_{i=1}^m\left(\E\left[\frac{\phi_{t,i}^2}{b_{t,i}}\right]\sum_{k=t}^{K}\beta^{\frac{k-t}{2}}\right)
	\leq 2\sum_{k=1}^K\sum_{i=1}^m\E\left[\frac{\phi_{k,i}^2}{b_{k,i}}\right].
	$$
	In $\overset{d}\leq$, the first term uses $\sqrt{a+b}\leq \sqrt{a}+\sqrt{b}$, and the second term follows
	$$
	\frac{\phi_{k,i}^2}{b_{k,i}}\leq \sum_{j=1}^n\frac{\left(\bP_k^\top\nabla f(\X_{k})\bQ_k\right)_{i,j}^2}{b_{k,i}} \leq\sum_{j=1}^n\left(\frac{\bP_k^\top\nabla f(\X_{k})\bQ_k}{\sqrt[4]{\V_k+\varepsilon}}\right)_{i,j}^2,
	$$
    where we use (\ref{soap-a-b-bound}). In $\overset{e}\leq$, we use
    \begin{align}
    \sum_{k=1}^K\sum_{i=1}^m\E\left[\varphi_{k,i}^2\right]&\leq \sum_{k=1}^K\sum_{i=1}^m\sum_{j=1}^n\E\left[\left(\bP_k^\top\left(\G_k-\nabla f(\X_{k})\right)\bQ_k\right)_{i,j}^2\right]\notag\\
    &=\sum_{k=1}^K\E\left[\left\|\bP_k^\top\left(\G_k-\nabla f(\X_{k})\right)\bQ_k\right\|_F^2\right]=\sum_{k=1}^K\E\left[\left\|\G_k-\nabla f(\X_{k})\right\|_F^2\right]\leq K\sigma^2,\notag
    \end{align}
	where we use the fact that $\|\cdot\|_F$ is a unitarily invariant norm such that $\|\bP^\top\A\bQ\|_F=\|\A\|_F$ for orthogonal matrices $\bP$ and $\bQ$. Similarly, we obtain
	$$
	\sum_{k=1}^{K}\E\left[\operatorname{tr}\mathbf{C}_{k}\right]\leq K\left(\sqrt{2n}\sigma+n\sqrt{\varepsilon}\right)+4\sum_{k=1}^{K}\E\left[\left\|\frac{\bP_k^\top\nabla f(\X_{k})\bQ_k}{\sqrt[4]{\V_{k}+\varepsilon}}\right\|_{F}^{2}\right],
	$$
	and finish the proof by $\sigma\leq\hat\sigma$.\hfill $\blacksquare$
\end{proof}

The following gives the proof of Theorem \ref{SOAP}.
\begin{proof}[Proof of Theorem \ref{SOAP}]
    Plugging (\ref{soap-lemma}) in \eqref{nuclear}, we have
    \begin{align*}
        &\left(\sum_{k=1}^{K} \E \left[ \left\| \nabla f(\X_{k})\right\|_{*}\right]\right)^{2}\\
        \leq& \frac{1}{2} \left(\sum_{k=1}^{K}\E\left[\operatorname{tr} \mathbf{B}_{k}\right]+\sum_{k=1}^{K}\E\left[\operatorname{tr} \mathbf{C}_{k}\right]\right) \mathcal{S}_K\\ 
        \leq& \left(\frac{1}{2}K\left(\sqrt{2}\left(\sqrt{m}+\sqrt{n}\right)\hat{\sigma}+\left(m+n\right)\sqrt{\varepsilon}\right)+4\mathcal{S}_K\right)\cdot \mathcal{S}_K\\
        \leq& \left(K\sqrt{m+n}\left(\hat{\sigma}+\frac{1}{2}\sqrt{\left(m+n\right)\varepsilon}\right)+4\mathcal{S}_K\right)\cdot \mathcal{S}_K\\
        \overset{a}\leq& \left(2K\sqrt{m+n}\hat{\sigma}+4\mathcal{S}_K\right)\cdot \mathcal{S}_K\\
        \overset{b}\leq& \left(2K\sqrt{m+n}\hat{\sigma}+28\frac{\sqrt{K \hat{\sigma}^{2} L\left(f\left(\mathbf{X}_{1}\right)-f^{*}\right)}}{\delta}\right)\cdot 7 \frac{\sqrt{K \hat{\sigma}^{2} L\left(f\left(\mathbf{X}_{1}\right)-f^{*}\right)}}{\delta}\\
        =&14K\sqrt{m+n}\hat{\sigma}\cdot
        \frac{\sqrt{K \hat{\sigma}^{2} L\left(f\left(\mathbf{X}_{1}\right)-f^{*}\right)}}{\delta}+196 \frac{K \hat{\sigma}^{2} L\left(f\left(\mathbf{X}_{1}\right)-f^{*}\right)}{\delta^2},
    \end{align*}
    where we let $\varepsilon=\frac{\tau\hat{\sigma}^2}{m+n}$ with $\tau\leq 1$ in $\overset{a}\leq$, while using the upper bound for $\mathcal{S}_K$ in Lemma \ref{main1} at $\overset{b}\leq$. Therefore,
    \begin{align*}
        \frac{1}{K}\sum_{k=1}^{K} \mathbb{E}\left[\left\|\nabla f(\mathbf{X}_{k})\right\|_{*}\right]
        \leq&\sqrt{14}\sqrt[4]{\frac{(m+n)\hat{\sigma}^2}{\delta^2}}\sqrt[4]{\frac{\hat{\sigma}^{2}L\left(f(\mathbf{X}_{1})-f^{*}\right)}{K}}+14\sqrt{\frac{\hat{\sigma}^{2}L\left(f(\mathbf{X}_{1})-f^{*}\right)}{K\delta^2}}\\
        \overset{c}\leq& \left(4\left(\sqrt{m+n}\frac{\hat{\sigma}}{\delta}\right)^{\frac{1}{2}}+14\frac{\hat{\sigma}}{\delta}\right)\cdot \sqrt[4]{\frac{\hat{\sigma}^{2}L\left(f(\mathbf{X}_{1})-f^{*}\right)}{K}}\\
        \leq& \left(2\sqrt{m+n}+16\frac{\hat{\sigma}}{\delta}\right)\cdot \sqrt[4]{\frac{\hat{\sigma}^{2}L\left(f(\mathbf{X}_{1})-f^{*}\right)}{K}},
    \end{align*}
    where we use $\hat\sigma^4\geq \frac{\hat{\sigma}^{2}L\left(f(\mathbf{X}_{1})-f^{*}\right)}{K}$ from the definition of $\hat\sigma$ in $\overset{c}\leq$.\hfill $\blacksquare$
\end{proof}

\section{Proofs for SPlus}\label{appendix-c}
The SPlus algorithm proposed in \citep{splus-2025-nips} is shown in Algorithm \ref{splus}. We extract the core idea of sign-based normalization, replace the sign-type update by a truncated modification, absorb the factor $\frac{2}{m+n}$ into $\eta$, and omit the iterate averaging, which leads to a suitable generalized Algorithm \ref{gen-splus}. 
We take a slightly different analysis approach for SPlus: since the operation $\bPh_k$ in this algorithm is computed directly from the current rotated momentum (unlike SOAP, which includes a historical averaging term $\V_k$), the entire analysis is simplified. We can deduce the following lemma from Lemma \ref{momentum-bound-lemma}. It is crucial for non-historic dependent update algorithms, such as SPlus.

\begin{lemma}\label{error-bounding}
	Suppose that Assumptions 1-3 and condition $\fO_{k,i,j}\geq\delta$ hold. Let $\hat\sigma^2=\max\left\{\sigma^2,\frac{L\left(f(\X_1)-f^*\right)}{K\gamma^2}\right\}$ with any $\gamma\in(0,1]$, $1-\theta=\sqrt{\frac{L\left(f(\X_1)-f^*\right)}{K\hat\sigma^2}}$, and $\eta=\sqrt{\frac{\delta^2\left(f(\X_1)-f^*\right)}{4LK\hat\sigma^2}}$. Then for the general Algorithm \ref{general}, we have
	$$
	\sum_{k=1}^K\E_{\F_k}\left[\left\|\M_k-\nabla f(\X_k) \right\|_F\right]\leq \sqrt{13}K\hat{\sigma}.
	$$
\end{lemma}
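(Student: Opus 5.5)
The plan is to bound the second moment $\sum_{k=1}^K\E\left[\left\|\M_k-\nabla f(\X_k)\right\|_F^2\right]$ by a constant times $K\hat\sigma^2$. The claim then follows from Cauchy--Schwarz:
$$\sum_{k=1}^K\E\left[\left\|\M_k-\nabla f(\X_k)\right\|_F\right]\leq\sqrt{K\sum_{k=1}^K\E\left[\left\|\M_k-\nabla f(\X_k)\right\|_F^2\right]},$$
so it suffices to show $\sum_k e_k\leq 13K\hat\sigma^2$, where $e_k:=\E\left[\left\|\M_k-\nabla f(\X_k)\right\|_F^2\right]$. Write also $u_k:=\E\left[\left\|\frac{\bP_k^\top\M_k\bQ_k}{\fO_k^{1/2}}\right\|_F^2\right]$ and $\Delta:=f(\X_1)-f^*$.

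\textbf{Step 1: telescope the momentum recursion.} Take full expectations in Lemma \ref{momentum-bound-lemma}, sum over $k=2,\dots,K$, and move $\theta\sum e_k$ to the left. This gives
$$(1-\theta)\sum_{k=1}^K e_k\leq e_1+\frac{L^2\eta^2}{\delta(1-\theta)}\sum_{k=1}^{K}u_k+K(1-\theta)^2\sigma^2.$$
With $\eta^2=\frac{\delta^2(1-\theta)^2}{4L^2}$, which is exactly the prescribed choice, the coefficient $\frac{L^2\eta^2}{\delta(1-\theta)^2}$ appearing after dividing by $1-\theta$ equals $\delta/4$.

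\textbf{Step 2: control $\sum u_k$.} Split $\M_k=\nabla f(\X_k)+(\M_k-\nabla f(\X_k))$ inside the rotated normalized norm. Using $(a+b)^2\leq 2a^2+2b^2$, unitary invariance of $\|\cdot\|_F$, and $\fO_{k,i,j}\geq\delta$, we get
$$u_k\leq 2\,\E\left[\left\|\frac{\bP_k^\top\nabla f(\X_k)\bQ_k}{\fO_k^{1/2}}\right\|_F^2\right]+\frac{2}{\delta}e_k.$$
Hence $\frac{\delta}{4}\sum_k u_k\leq\frac{\delta}{2}\mathcal{S}_K+\frac12\sum_k e_k$. The $\frac12\sum_k e_k$ term is absorbed into the left-hand side, and $\mathcal{S}_K$ is bounded by Lemma \ref{main1} as $\frac{7}{\delta}\sqrt{K\hat\sigma^2L\Delta}$, which cancels the $\delta$. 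The result is
$$\sum_k e_k\leq 2\left[\frac{e_1}{1-\theta}+\frac72\sqrt{K\hat\sigma^2 L\Delta}+K(1-\theta)\hat\sigma^2\right].$$

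\textbf{Step 3: plug in the parameters.} The bound (\ref{variancebound}) gives $e_1\leq 2L\Delta+(1-\theta)^2\sigma^2$. With $1-\theta=\sqrt{L\Delta/(K\hat\sigma^2)}$ we have $\frac{L\Delta}{1-\theta}=\sqrt{KL\Delta\hat\sigma^2}=K\hat\sigma^2(1-\theta)$. Moreover $1-\theta\leq\gamma\leq1$ by the definition of $\hat\sigma$. Every term is therefore at most a constant times $K\hat\sigma^2$.

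\textbf{Main obstacle.} The crude bookkeeping above gives a constant around $15$. Reaching $13$ requires tighter accounting: I would first try using the telescoped inequality from the proof of Lemma \ref{main1} directly. That inequality already retains the $\frac{\eta}{4}u_k$ terms on its left-hand side, so it bounds $\sum u_k$ without the factor-$2$ splitting. I would also check that the boundary terms at $k=1$ and $k=K$ are not double counted. Apart from these constants, the only genuine subtlety is the absorption step, which relies on the coefficient $\delta/4\cdot 2/\delta=1/2$ being strictly less than $1$.
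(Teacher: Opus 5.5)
This is the paper's argument step for step: Cauchy--Schwarz, telescoping Lemma~\ref{momentum-bound-lemma} together with (\ref{variancebound}), splitting $u_k\le 2(\cdot)+\frac{2}{\delta}e_k$ via $\fO_{k,i,j}\ge\delta$, absorbing $\frac12\sum_k e_k$ using $\frac{L^2\eta^2}{\delta^2(1-\theta)^2}\le\frac14$, and then invoking Lemma~\ref{main1}.

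The only discrepancy is the constant. It does not come from the factor-2 splitting, since the paper does exactly that. It comes from Step~1, where you rounded $(K-1)(1-\theta)^2\sigma^2$ up to $K(1-\theta)^2\sigma^2$. Carried through, that leaves $13\sqrt{K\hat\sigma^2L\Delta}+2(1-\theta)\sigma^2$, which exceeds the target only by this boundary term.

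If you keep $K-1$, the $(1-\theta)^2\sigma^2$ contained in $e_1$ merges with it, and you get
\[
\sum_{k=1}^K e_k\le\frac{4L\Delta}{1-\theta}+2K(1-\theta)\sigma^2+\delta\,\mathcal{S}_K\le(4+2+7)\sqrt{K\hat\sigma^2L\Delta}\le 13K\hat\sigma^2 .
\]
This is precisely the paper's computation, so no detour through the telescoped inequality in the proof of Lemma~\ref{main1} is needed.
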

\begin{proof}
	Taking expectation on both sides of (\ref{GM-dif-equ}) with respect to $\zeta_k$ conditioned on $\F_{k-1}$, summing over $k=2,3,\cdots,K$, and adding (\ref{variancebound}), we have
	\begin{align*}
		&\sum_{k=1}^K\E_{\F_k}\left[\left\|\nabla f(\X_k)-\M_k\right\|_F^2\right]\\
		\leq&\theta\sum_{k=2}^K\E_{\F_{k-1}}\left[\left\|\M_{k-1}-\nabla f(\X_{k-1})\right\|_F^2\right]+\frac{L^2\eta^2}{\delta(1-\theta)}\sum_{k=2}^K\E_{\F_{k-1}}\left[\left\|\frac{\bP_{k-1}^\top\M_{k-1}\bQ_{k-1}}{\fO_{k-1}^{\frac{1}{2}}}\right\|_F^2\right]\\
        &+(K-1)(1-\theta)^2\sigma^2+2L\left(f(\X_1)-f^*\right)+(1-\theta)^2\sigma^2\\
        \leq&\theta\sum_{k=1}^K\E_{\F_k}\left[\left\|\M_k-\nabla f(\X_k)\right\|_F^2\right]+\frac{L^2\eta^2}{\delta(1-\theta)}\sum_{k=1}^K\E_{\F_k}\left[\left\|\frac{\bP_k^\top\M_k\bQ_k}{\fO_k^{\frac{1}{2}}}\right\|_F^2\right]\\
        &+K(1-\theta)^2\sigma^2+2L\left(f(\X_1)-f^*\right)\\
        \leq&\theta\sum_{k=1}^K\E_{\F_k}\left[\left\|\M_k-\nabla f(\X_k)\right\|_F^2\right]+\frac{2L^2\eta^2}{\delta(1-\theta)}\sum_{k=1}^K\E_{\F_k}\left[\left\|\frac{\bP_k^\top\nabla f(\X_k)\bQ_k}{\fO_k^{\frac{1}{2}}}\right\|_F^2\right]\\
        &+\frac{2L^2\eta^2}{\delta(1-\theta)}\sum_{k=1}^K\E_{\F_k}\left[\left\|\frac{\bP_k^\top(\M_k-\nabla f(\X_k))\bQ_k}{\fO_k^{\frac{1}{2}}}\right\|_F^2\right]+K(1-\theta)^2\sigma^2+2L\left(f(\X_1)-f^*\right)\\
        \overset{a}\leq&\theta\sum_{k=1}^K\E_{\F_k}\left[\left\|\M_k-\nabla f(\X_k)\right\|_F^2\right]+\frac{2L^2\eta^2}{\delta(1-\theta)}\sum_{k=1}^K\E_{\F_k}\left[\left\|\frac{\bP_k^\top\nabla f(\X_k)\bQ_k}{\fO_k^{\frac{1}{2}}}\right\|_F^2\right]\\
        &+\frac{2L^2\eta^2}{\delta^2(1-\theta)}\sum_{k=1}^K\E_{\F_k}\left[\left\|\bP_k^\top(\M_k-\nabla f(\X_k))\bQ_k\right\|_F^2\right]+K(1-\theta)^2\sigma^2+2L\left(f(\X_1)-f^*\right)\\
        \overset{b}\leq&\theta\sum_{k=1}^K\E_{\F_k}\left[\left\|\M_k-\nabla f(\X_k)\right\|_F^2\right]+\frac{\delta(1-\theta)}{2}\sum_{k=1}^K\E_{\F_k}\left[\left\|\frac{\bP_k^\top\nabla f(\X_k)\bQ_k}{\fO_k^{\frac{1}{2}}}\right\|_F^2\right]\\
        &+\frac{1-\theta}{2}\sum_{k=1}^K\E_{\F_k}\left[\left\|\M_k-\nabla f(\X_k)\right\|_F^2\right]+K(1-\theta)^2\sigma^2+2L\left(f(\X_1)-f^*\right),
    \end{align*}
    where we use $\fO_{k,i,j}\geq\delta$ in $\overset{a}\leq$, $\frac{L^2\eta^2}{\delta^2(1-\theta)^2}\leq \frac{1}{4}$ and the fact that $\|\cdot\|_F$ is a unitarily invariant norm in $\overset{b}\leq$. So we have
	\begin{align*}
		&\sum_{k=1}^K\E_{\F_k}\left[\left\|\nabla f(\X_k)-\M_k\right\|_F^2\right]\\
        \leq& \frac{4L\left(f(\X_1)-f^*\right)}{1-\theta}+2K(1-\theta)\sigma^2+\delta\sum_{k=1}^K\E_{\F_k}\left[\left\|\frac{\bP_k^\top\nabla f(\X_k)\bQ_k}{\fO_k^{\frac{1}{2}}}\right\|_F^2\right]\\
        \overset{c}\leq& \frac{4L\left(f(\X_1)-f^*\right)}{1-\theta}+2K(1-\theta)\sigma^2+7\sqrt{K\hat\sigma^2L\left(f(\X_1)-f^*\right)}\\\overset{d}\leq&13\sqrt{K\hat\sigma^2L\left(f(\X_1)-f^*\right)}\overset{e}\leq 13K\hat\sigma^2,
	\end{align*}
	where we use Lemma \ref{main1} in $\overset{c}\leq$,  $1-\theta=\sqrt{\frac{L\left(f(\X_1)-f^*\right)}{K\hat\sigma^2}}$ and $\sigma^2\leq\hat\sigma^2$ in $\overset{d}\leq$, and $L\left(f(\X_1)-f^*\right)\leq K\hat\sigma^2$ from the definition of $\hat\sigma$ in $\overset{e}\leq$. Using Cauchy-Schwarz inequality, we have
	\begin{align*}
		\sum_{k=1}^K\E_{\F_k}\left[\left\|\nabla f(\X_k)-\M_k\right\|_F\right]\leq& \sqrt{K\sum_{k=1}^K\E_{\F_k}\left[\left\|\nabla f(\X_k)-\M_k\right\|_F^2\right]}\leq \sqrt{13}K\hat\sigma.
	\end{align*}
    \hfill $\blacksquare$
\end{proof}

The following lemma is an analogue of Lemma \ref{trace-control-soap}, but with a simpler proof.
\begin{lemma}\label{trace-control-splus}
	Suppose that Assumptions 1-3 and condition $\fO_{k,i,j}\geq\delta$ hold. Let $\hat\sigma^2=\max\left\{\sigma^2,\frac{L\left(f(\X_1)-f^*\right)}{K\gamma^2}\right\}$ with any $\gamma\in(0,1]$, $1-\theta=\sqrt{\frac{L\left(f(\X_1)-f^*\right)}{K\hat\sigma^2}}$, and $\eta=\sqrt{\frac{\delta^2\left(f(\X_1)-f^*\right)}{4LK\hat\sigma^2}}$. 
	Then for Algorithm \ref{gen-splus}, we have
	\begin{eqnarray}
    \begin{aligned}\label{bound-splus}
		\sum_{k=1}^{K}\E\left[\operatorname{tr} \mathbf{B}_{k}\right]&\leq 2\sqrt{13m}K\hat{\sigma}+2Km\delta+\sum_{k=1}^K\E_{\F_{k}}\left[\left\|\frac{\bP_k^\top\nabla f(\X_k)\bQ_k}{\fO_k^{\frac{1}{2}}}\right\|_F^2\right],\\
		\sum_{k=1}^{K}\E\left[\operatorname{tr} \mathbf{C}_{k}\right]& \leq 2\sqrt{13n}K\hat{\sigma}+2Kn\delta+\sum_{k=1}^K\E_{\F_{k}}\left[\left\|\frac{\bP_k^\top\nabla f(\X_k)\bQ_k}{\fO_k^{\frac{1}{2}}}\right\|_F^2\right].
	\end{aligned}
    \end{eqnarray}
\end{lemma}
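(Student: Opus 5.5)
We follow the sketch given after Theorem \ref{SPlusCon} and treat only $\mathbf{B}_k$; the bound for $\mathbf{C}_k$ is obtained by swapping the roles of rows and columns. For each row $i$ we choose $j_i^*=\argmax_{j}\left|\bP_k^\top\M_k\bQ_k\right|_{i,j}$ and set $b_{k,i}=\left|\bP_k^\top\M_k\bQ_k\right|_{i,j_i^*}+\delta$. Since $\max\{a,\delta\}\le a+\delta$ and $\fO_{k,i,j}=\max\{|\bP_k^\top\M_k\bQ_k|_{i,j},\delta\}$, this $b_{k,i}$ dominates $\max_j\fO_{k,i,j}$, so it is admissible in (\ref{define-B-C}). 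Note also that $b_{k,i}\ge \fO_{k,i,j_i^*}$.

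\textbf{Bounding $b_{k,i}$.} Split $\M_k=\nabla f(\X_k)+(\M_k-\nabla f(\X_k))$ inside the rotation and apply the triangle inequality at entry $(i,j_i^*)$:
\begin{equation*}
b_{k,i}\le \left|\bP_k^\top\nabla f(\X_k)\bQ_k\right|_{i,j_i^*}+\left|\bP_k^\top(\M_k-\nabla f(\X_k))\bQ_k\right|_{i,j_i^*}+\delta .
\end{equation*}
For the first term, use the AM--GM inequality $a\le \frac{a^2}{2b}+\frac b2$ with $b=b_{k,i}$. Then use $b_{k,i}\ge\fO_{k,i,j_i^*}$ and enlarge the single entry to the full row sum, which gives $\sum_j \frac{(\bP_k^\top\nabla f(\X_k)\bQ_k)_{i,j}^2}{2\fO_{k,i,j}}$. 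For the second term, bound the single entry by the $\ell_2$ norm of row $i$ of $\bP_k^\top(\M_k-\nabla f(\X_k))\bQ_k$. Moving $\frac{b_{k,i}}{2}$ to the left-hand side and multiplying by $2$ gives
\begin{equation*}
b_{k,i}\le \sum_{j=1}^n\frac{(\bP_k^\top\nabla f(\X_k)\bQ_k)_{i,j}^2}{\fO_{k,i,j}}+2\left\|\left(\bP_k^\top(\M_k-\nabla f(\X_k))\bQ_k\right)_{i,:}\right\|_2+2\delta.
\end{equation*}

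\textbf{Summing over $i$ and $k$.} The first term sums exactly to $\left\|\frac{\bP_k^\top\nabla f(\X_k)\bQ_k}{\fO_k^{1/2}}\right\|_F^2$, and the $\delta$ term contributes $2Km\delta$. The row-norm term is the main step. By Cauchy--Schwarz over the $m$ rows,
\begin{equation*}
\sum_i\|\cdot_{i,:}\|_2\le\sqrt m\,\|\bP_k^\top(\M_k-\nabla f(\X_k))\bQ_k\|_F=\sqrt m\,\|\M_k-\nabla f(\X_k)\|_F,
\end{equation*}
where the equality is unitary invariance of the Frobenius norm. Taking expectations and applying Lemma \ref{error-bounding}, whose hypotheses are exactly the parameter choices assumed in the statement, yields $2\sqrt m\cdot\sqrt{13}K\hat\sigma=2\sqrt{13m}K\hat\sigma$. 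Adding the three contributions gives (\ref{bound-splus}).

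The only delicate points are the direction of the AM--GM/absorption step and checking that the denominator can be switched from $b_{k,i}$ to $\fO_{k,i,j}$, which holds for every $j$ in row $i$ because $b_{k,i}\ge\max_j\fO_{k,i,j}$. Lemma \ref{error-bounding} carries the real stochastic work, and its condition $\fO_{k,i,j}\ge\delta$ holds by the truncation in Algorithm \ref{gen-splus}.
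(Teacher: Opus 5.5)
Your proposal is correct and matches the paper's proof: the same choice $b_{k,i}=\max_j|\bP_k^\top\M_k\bQ_k|_{i,j}+\delta$, the triangle inequality at $(i,j_i^*)$, and AM--GM absorption of $b_{k,i}/2$. The remaining steps are also the same: enlarging to the full row sum with denominators $\fO_{k,i,j}$, Cauchy--Schwarz over rows with unitary invariance, and Lemma \ref{error-bounding}. The only difference is cosmetic: you switch the denominator to $\fO_{k,i,j_i^*}$ before enlarging to the row sum, whereas the paper enlarges first and then switches.
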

\begin{proof}\label{form}
Different from the definition for SOAP, we instead define
$$
b_{k,i}=\max_{1 \leq j \leq n}\left|\bP_k^\top\M_k\bQ_k\right|_{i,j}+\delta\geq \max_{1 \leq j \leq n}\fO_{k,i,j}
$$
for (\ref{SPlus-def}). Denote
$$
j_i^*=\argmax_{1\leq j \leq n}\left|\bP_k^\top\M_k\bQ_k\right|_{i,j}.
$$
Therefore,
$$
b_{k,i}=\left|\bP_k^\top\M_k\bQ_k\right|_{i,j_i^*}+\delta\leq \left|\bP_k^\top\nabla f(\X_k)\bQ_k\right|_{i,j_i^*}+\left|\bP_k^\top\left(\M_k-\nabla f(\X_k) \right)\bQ_k\right|_{i,j_i^*}+\delta.
$$
Since
$$
\left|\bP_k^\top\nabla f(\X_k)\bQ_k\right|_{i,j_i^*}\leq \frac{\left|\bP_k^\top\nabla f(\X_k)\bQ_k\right|_{i,j_i^*}^2}{2b_{k,i}}+\frac{b_{k,i}}{2},
$$
plugging in gives
\begin{align*}
	b_{k,i}&\leq\frac{\left|\bP_k^\top\nabla f(\X_k)\bQ_k\right|_{i,j_i^*}^2}{b_{k,i}}+2\left|\bP_k^\top\left(\M_k-\nabla f(\X_k) \right)\bQ_k\right|_{i,j_i^*}+2\delta\\
    &\leq\sum_{j=1}^n\frac{\left|\bP_k^\top\nabla f(\X_k)\bQ_k\right|_{i,j}^2}{b_{k,i}}+2\left\|\left(\bP_k^\top\left(\M_k-\nabla f(\X_k) \right)\bQ_k\right)_{i,:}\right\|_2+2\delta\\
	&\leq\sum_{j=1}^n\left(\frac{\bP_k^\top\nabla f(\X_k)\bQ_k}{\fO_k^{\frac{1}{2}}}\right)_{i,j}^2+2\left\|\left(\bP_k^\top\left(\M_k-\nabla f(\X_k) \right)\bQ_k\right)_{i,:}\right\|_2+2\delta.
\end{align*}
Summing over $i=1,\ldots,m$ and $k=1,\ldots,K$ gives
\begin{eqnarray}
\begin{aligned}\notag
	&\sum_{k=1}^K\E\left[\operatorname{tr} \mathbf{B}_{k}\right]=\sum_{k=1}^K\sum_{i=1}^m \E\left[b_{k,i}\right]\\
	\leq&\sum_{k=1}^K\hspace*{-0.05cm}\sum_{i=1}^m\hspace*{-0.05cm}\sum_{j=1}^n\hspace*{-0.02cm}\E_{\F_k}\hspace*{-0.13cm}\left[\hspace*{-0.07cm}\left(\hspace*{-0.05cm}\frac{\bP_k^\top\nabla f(\X_k)\bQ_k}{\fO_k^{\frac{1}{2}}}\hspace*{-0.04cm}\right)_{i,j}^2\hspace*{-0.02cm}\right]\hspace*{-0.09cm}+\hspace*{-0.07cm}2\hspace*{-0.02cm}\sum_{k=1}^K\hspace*{-0.05cm}\sum_{i=1}^m\hspace*{-0.02cm}\E_{\F_k}\hspace*{-0.11cm}\left[\left\|\left(\bP_k^\top\hspace*{-0.05cm}\left(\M_k\hspace*{-0.07cm}-\hspace*{-0.07cm}\nabla f(\X_k) \right)\bQ_k\right)_{i,:}\right\|_2\right]\hspace*{-0.09cm}+\hspace*{-0.07cm}2Km\delta\\
	\leq&\sum_{k=1}^K\E_{\F_{k}}\hspace*{-0.11cm}\left[\hspace*{-0.02cm}\left\|\frac{\bP_k^\top\nabla f(\X_k)\bQ_k}{\fO_k^{\frac{1}{2}}}\right\|_F^2\right]\hspace*{-0.09cm}+\hspace*{-0.07cm}2\hspace*{-0.02cm}\sum_{k=1}^K\hspace*{-0.02cm}\E_{\F_k}\hspace*{-0.11cm}\left[\hspace*{-0.02cm}\sqrt{m\sum_{i=1}^m\left\|\left(\bP_k^\top\left(\M_k\hspace*{-0.09cm}-\hspace*{-0.09cm}\nabla f(\X_k) \right)\bQ_k\right)_{i,:} \right\|_2^2}\right]\hspace*{-0.09cm}+\hspace*{-0.07cm}2Km\delta\\ 
    =&\sum_{k=1}^K\E_{\F_{k}}\left[\left\|\frac{\bP_k^\top\nabla f(\X_k)\bQ_k}{\fO_k^{\frac{1}{2}}}\right\|_F^2\right]+2\sqrt{m}\sum_{k=1}^K\E_{\F_k}\left[\left\|\bP_k^\top\left(\M_k-\nabla f(\X_k) \right)\bQ_k\right\|_F\right]+2Km\delta\\
	=&\sum_{k=1}^K\E_{\F_{k}}\left[\left\|\frac{\bP_k^\top\nabla f(\X_k)\bQ_k}{\fO_k^{\frac{1}{2}}}\right\|_F^2\right]+2\sqrt{m}\sum_{k=1}^K\E_{\F_k}\left[\left\|\M_k-\nabla f(\X_k) \right\|_{F}\right]+2Km\delta\\
	\leq& \sum_{k=1}^K\E_{\F_{k}}\left[\left\|\frac{\bP_k^\top\nabla f(\X_k)\bQ_k}{\fO_k^{\frac{1}{2}}}\right\|_F^2\right]+2\sqrt{13m}K\hat{\sigma}+2Km\delta,
\end{aligned}
\end{eqnarray}
where we use Lemma \ref{error-bounding} in the last inequality. Similarly,
$$
\sum_{k=1}^K\E\left[\operatorname{tr} \mathbf{C}_{k}\right]\leq \sum_{k=1}^K\E_{\F_{k}}\left[\left\|\frac{\bP_k^\top\nabla f(\X_k)\bQ_k}{\fO_k^{\frac{1}{2}}}\right\|_F^2\right]+2\sqrt{13n}K\hat{\sigma}+2Kn\delta.
$$
\hfill $\blacksquare$
\end{proof}
Similar to the proof of Theorem \ref{SOAP}, the following gives the proof of Theorem \ref{SPlusCon}.
\begin{proof}[Proof of Theorem \ref{SPlusCon}]
Substituting (\ref{bound-splus}) into \eqref{nuclear} and using the bound of $\mathcal{S}_K$ in Lemma \ref{main1}, we have
\begin{align*}
        &\left(\sum_{k=1}^{K} \E \left[ \left\| \nabla f(\X_{k})\right\|_{*}\right]\right)^{2}\\
        \leq& \left(\sqrt{13}\left(\sqrt{m}+\sqrt{n}\right)K\hat{\sigma}+K(m+n)\delta+\mathcal{S}_K\right)\cdot \mathcal{S}_K\\
        \leq& \left(\sqrt{38(m+n)}K\hat{\sigma}+7\frac{\sqrt{K \hat{\sigma}^{2} L\left(f\left(\mathbf{X}_{1}\right)-f^{*}\right)}}{\delta}\right)\cdot 7 \frac{\sqrt{K \hat{\sigma}^{2} L\left(f\left(\mathbf{X}_{1}\right)-f^{*}\right)}}{\delta}\\
        =&7\sqrt{38(m+n)}K\hat{\sigma}\cdot
        \frac{\sqrt{K \hat{\sigma}^{2} L\left(f\left(\mathbf{X}_{1}\right)-f^{*}\right)}}{\delta}+49 \frac{K \hat{\sigma}^{2} L\left(f\left(\mathbf{X}_{1}\right)-f^{*}\right)}{\delta^2},
    \end{align*}
    and
    \begin{align*}
        \frac{1}{K}\sum_{k=1}^{K} \mathbb{E}\left[\left\|\nabla f(\mathbf{X}_{k})\right\|_{*}\right]
        \leq&7\sqrt[4]{\frac{(m+n)\hat{\sigma}^2}{\delta^2}}\sqrt[4]{\frac{\hat{\sigma}^{2}L\left(f(\mathbf{X}_{1})-f^{*}\right)}{K}}+7\sqrt{\frac{\hat{\sigma}^{2}L\left(f(\mathbf{X}_{1})-f^{*}\right)}{K\delta^2}}\\
        \leq& \left(7\left(\sqrt{m+n}\frac{\hat{\sigma}}{\delta}\right)^{\frac{1}{2}}+7\frac{\hat{\sigma}}{\delta}\right)\cdot \sqrt[4]{\frac{\hat{\sigma}^{2}L\left(f(\mathbf{X}_{1})-f^{*}\right)}{K}}\\
        \leq& \left(4\sqrt{m+n}+11\frac{\hat{\sigma}}{\delta}\right)\cdot \sqrt[4]{\frac{\hat{\sigma}^{2}L\left(f(\mathbf{X}_{1})-f^{*}\right)}{K}},
    \end{align*}
    where we use $\delta=\frac{\tau\hat{\sigma}}{\sqrt{m+n}}$. Substituting this setting yields the stated bound.
    \hfill $\blacksquare$
\end{proof}

\end{document}